\documentclass[graybox]{svmult}

\usepackage{type1cm}        
%
\usepackage{makeidx}         
\usepackage{graphicx}        
\usepackage{multicol}        
\usepackage[bottom]{footmisc}

\usepackage{newtxtext}       %
\usepackage[varvw]{newtxmath}       


%
\makeindex             


\begin{document}

\title*{Blowups and Tops of Overlapping Iterated Function Systems}
\author{Louisa F. Barnsley and Michael F. Barnsley}
\institute{Louisa F. Barnsley and Michael F. Barnsley \at Australian National University,
Canberra, ACT, Australia \email{michael.barnsley@anu.edu.au}}

\maketitle

\begin{center}
\textit{Dedicated to Robert Strichartz}

\medskip
\end{center}

\abstract{We review aspects of an important paper by Robert Strichartz concerning
reverse iterated function systems (i.f.s.) and fractal blowups. We compare the
invariant sets of reverse i.f.s. with those of more standard i.f.s. and with
those of inverse i.f.s. We describe Strichartz' fractal blowups and explain how
they may be used to construct tilings of $\mathbb{R}^{n}$ even in the case
where the i.f.s. is overlapping. We introduce and establish the notion of
\textquotedblleft tops\textquotedblright\ of blowups. Our motives are not
pure: we seek to show that a simple i.f.s. and an idea of Strichartz, can be
used to create complicated tilings that may model natural structures.}

\begin{figure}[ptb]%
\centering
\includegraphics[
height=2.7657in,
width=3.4203in
]%
{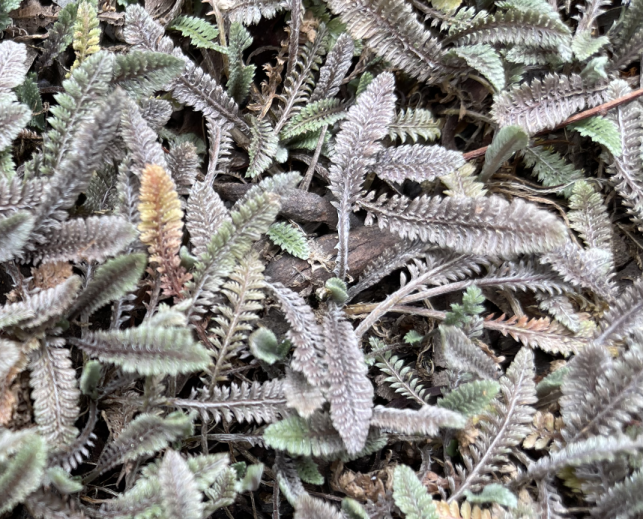}%
\caption{Small ferns growing wildly: can fractal geometry model such images?}%
\label{ferny}%
\end{figure}

\section{Introduction}
In \textquotedblleft Fractals in the large" \cite{strichartz} Robert
Strichartz observes that fractal structure is characterized by repetition of
detail at all small scales. He asks \textquotedblleft Why not large scales as
well?\textquotedblright\ He proposes two ways to study large scaling
structures using developments of iterated function systems. Here we review
geometrical aspects of his paper and make a contribution in the area of tiling theory.

In his first approach, Strichartz defines a \textit{reverse iterated function
system} (r.i.f.s.) to be a set of $m>1$ expansive maps
\[
T=\{t_{i}:M\rightarrow M|i=1,2,\dots,m\}
\]
acting on a locally compact discrete metric space $M$, where every point of
$M$ is isolated. Here the large scaling structures are the invariant sets of
$T$, sets $S\subset M$ which obey
\[
S=\bigcup\limits_{i=1}^{m}t_{i}\left(  S\right)  .
\]

Why does does Strichartz restrict his definition to functions acting on
\textit{discrete} metric spaces? (i) He establishes that there are interesting
nontrivial examples. (ii) He shows that such objects (act as a kind of
skeleton to) play a role in his second kind of large scale fractal structure
that he calls a \textit{fractal blowup. }Probably he had other reasons related
to situations where his approach to analysis on fractals could be explored.

In Section \ref{preliminaries} we present notation for iterated function
systems (i.f.s.) acting on $\mathbb{R}^{n}$. We are particularly concerned
with notation for chains of compositions of functions and properties of
addresses of points on fractals. In Section \ref{reversesec} we review
Strichartz' definition and basic theorem concerning invariant sets of reverse
iterated function systems, and we compare them to the corresponding situation
for contractive i.f.s. We describe some kinds of invariant sets of contractive
i.f.s. and consider how they compare to Strichartz' large scaling structures.
It is a notable feature of Strichartz' definition that he restricts attention
to functions acting on compact discrete metric spaces. We mention that, if
this restriction is lifted, sometimes very interesting structures,
characterized by repetition of structure at large scales, may be obtained. See
for example Figure 2.

In Section \ref{blowupsec} we define fractal blowups, Strichartz' second kind
of large scale fractal structure, and present his characterization of them,
when the open set condition (OSC) is obeyed, as unions of scaled copies of an
i.f.s. attractor, with the scaling restricted to a finite range. We outline
the proof of his characterization theorem using different notation,
anticipating fractal tops. We recall Strichartz' final theorem on the topic,
where he restricts attention to blowups of an i.f.s. all of the same scaling
factor. Here he combines his two ideas: he reveals that the fractal blowup is
in fact a copy of the original fractal translated by all the points on an
invariant set of a r.i.f.s.

In Section \ref{topsec1} we discuss how tilings of blowups can be extended to
overlapping i.f.s. In \cite{tilings} it was shown how, in the overlapping (OSC
not obeyed) case, tilings of blowups can be defined using an artificial
recursive system of \textquotedblleft masks\textquotedblright. Here the
approach is more natural, but we pay a price--sequences of tilings are not
necessarily nested. Here tilings are defined by using \textquotedblleft
fractal tops\textquotedblright, namely attractors with their points labelled
by lexicographically highest addresses. The needed theory of fractal tops is
developed in Subsection \ref{ftopssubsection}. Then in Subsection
\ref{topblowupsec} we use these top addresses to define and establish the
existence of tilings of some blowups for overlapping i.f.s. The main theorem
concerns the relationship between the successive tilings that may be used to
define a tiling of a blowup. In Subsection \ref{leafysec} we present an
example involving a tile that resembles a leaf.

Strichartz' paper has overlap with \cite{bandt2}, published about the same
time by Christoph Bandt. Both papers consider the relationship between i.f.s.
theory and self-similar tiling theory. Current work in tiling theory
does not typically use the mapping point of view, but both Bandt and Stricharz do. Bandt is
particularly focused on the open set condition and the algebraic structure of
tilings, but also has a clear understanding of tilings of blowups when the OSC
is obeyed.

Strichartz' paper also contains measure theory aspects that we do not discuss. But
from the little we have focused on here, much has been learned concerning the
subtlety, the depth, and the elegant simplicity of the mathematical thinking
of Robert Strichartz.

\section{Preliminaries}
\label{preliminaries}
Let $\mathbb{N=\{}1,2,\dots\}.$ An iterated function system (i.f.s.) is a set
of functions
\[
F=\{f_{i}:\mathbb{X\rightarrow X}|i=1,2,\dots,m\}
\]
mapping a space $\mathbb{X}$ into itself, with $m\in\mathbb{N}$. An invariant
set of $F$ is $S\subset\mathbb{X}$ such that
\[
S=F(S):=\bigcup\limits_{i=1}^{m}f_{i}(S)\text{ where }f_{i}(S)=\{f_{i}(s)|s\in
S\}.
\]
We use the same symbol $F$ for the i.f.s. and for its action on $S$, as
defined here. 

The i.f.s. $F$ is said to be contractive when $\mathbb{X}$ is equipped with a
metric $d$ such that $d(f_{i}x,f_{i}y)\leq\lambda d(x,y)$ for some
$0<\lambda<1$ and all $x,y\in\mathbb{X}$. If $\mathbb{X=R}^{n},\ $we take $d$
to be the Euclidean metric. A contractive i.f.s. on $\mathbb{R}^{n}$ is
associated with its attractor $A$, the unique non-empty closed and bounded
invariant set of $F$ \cite{hutchinson}. But Strichartz is also interested in
the case where the underlying space is discrete and the maps are expansive.

We use \textit{addresses} to describe compositions of maps. Addresses are
defined in terms of the indices of the maps of $F$. Let $\Sigma=\left\{
1,2,\dots,m\right\}  ^{\mathbb{N}}$, the set of strings of the form
$\mathbf{j}=j_{1}j_{2}\dots$ where each $j_{i}$ belongs to $\left\{
1,2,\dots,m\right\}  $. We write $\Sigma_{n}=\left\{  1,2,\dots,m\right\}
^{n}$ and let $\Sigma_{\mathbb{N}}=\cup_{n=1}^{\infty}\Sigma_{n}.$ The address
$\mathbf{j}\in\Sigma$ truncated to length $n$ is denoted by $\mathbf{j}%
|n=j_{1}j_{2}\dots j_{n}\in\Sigma_{\mathbb{N}},$ and we define
\begin{align*}
f_{\mathbf{j}|n}  &  =f_{j_{1}}f_{j_{2}}\dots f_{j_{n}}=f_{j_{1}}\circ
f_{j_{2}}\circ\dots\circ f_{j_{n}},\\
f_{-\mathbf{j}|n}  &  =f_{j_{1}}^{-1}f_{j_{2}}^{-1}\dots f_{j_{n}}%
^{-1}=f_{j_{1}}^{-1}\circ f_{j_{2}}^{-1}\circ\dots\circ f_{j_{n}}^{-1}.
\end{align*}
Define a metric $d$ on $\Sigma$ by $d(\mathbf{j},\mathbf{k})=2^{-\max
\{n|j_{m}=k_{m},m=1,2,...,n\}}$ for $\mathbf{j}\neq\mathbf{k}$, so that
$\left(  \Sigma,d\right)  $ is a compact metric space.

The \textit{forward orbit }of a point $x$ under (the semigroup generated by)
$F$ is
\[
\{f_{\mathbf{j}|n}(x)|\mathbf{j}\in\Sigma,n\in\mathbb{N\}}\text{.}%
\]
Here we do not allow $\mathbf{j}|n$ to be the empty set, so $x$ is not
necessarily an element of its forward orbit under the i.f.s. Indeed, $x$ is a
member of its forward orbit if and only if $x$ is a fixed point of one of the
composite maps $f_{\mathbf{j}|n}$.

Now let $F$ be a contractive IFS of invertible maps on $\mathbb{R}^{n}$. Then
a continuous surjection $\pi:\Sigma\rightarrow A$ is defined by%
\[
\pi(\mathbf{j})=\lim_{N\rightarrow\infty}f_{\mathbf{j}|N}(x)=\lim
_{N\rightarrow\infty}f_{j_{1}}f_{j_{2}}\dots f_{j_{N}}(x).
\]
The limit is independent of $x$. The convergence is uniform in $\mathbf{j}$
over $\Sigma,$ and uniform in $x$ over any compact subset of $\mathbb{R}^{n}$.
We say $\mathbf{j}\in\Sigma$ is an \textit{address} of the point
$\pi(\mathbf{j})\in A$.

We define $i:\Sigma\rightarrow\Sigma$ by $i(\mathbf{j)=}ij_{1}j_{2}\dots$ But
we may also write $k_{1}k_{2}...k_{l}\mathbf{j}$ to mean the address
$k_{1}k_{2}\dots k_{l}j_{1}j_{2}\dots\in\Sigma.$ Let $\sigma:\Sigma
\rightarrow\Sigma$ be the shift operator defined by $\sigma(\mathbf{j)=}%
j_{2}j_{3}\dots$ . It is well-known that%
\[
f_{i}\circ\pi=\pi\circ i\mathbf{\ }\text{and }\pi\circ\sigma\left(
\mathbf{j}\right)  =f_{j_{1}}^{-1}\circ\pi\left(  \mathbf{j}\right)
\]
for all $i\in\{1,2,...m\},\mathbf{j\in}\Sigma$.

A notable shift invariant subset of $\Sigma$ is the set of disjunctive addresses $\Sigma_{dis}$. An address $\mathbf{j}\in\Sigma$ is \textit{disjunctive} when,
for each finite address $i_{1}i_{2}i_{3}\dots i_{k}\in\left\{  1,2,\dots
,m\right\}  ^{k}$, there is $l\in\mathbb{N}$ so that $j_{l+1}...j_{l+k}%
=i_{1}i_{2}i_{3}\dots i_{k}$. The
set of disjunctive addresses $\Sigma_{dis}\subset\Sigma$ is totally invariant
under the shift, according to $\sigma\left(  \Sigma_{dis}\right)
=\Sigma_{dis}.$  A point $a\in A$ is disjunctive if there is a
disjunctive address $\mathbf{j}\in\Sigma$ such that $\pi(\mathbf{j})=a$. Disjunctive points play a role in the structure of attractors.
For example, if\ the i.f.s. obeys the open set condition (OSC) and its
attractor has non-empty interior, then all the disjunctive points belong to
the interior of the attractor \cite{bandt}. Recall that $F$ obeys the OSC when
there exists a nonempty open set $O$ such that $\cup f_{i}(O)\subset O$ and
$f_{i}(O)\cap f_{j}(O)=\emptyset$ whenever $i\neq j.$

\section{Reverse iterated function systems}
\label{reversesec}
In his first approach to large scaling structures, Strichartz defines a
\textit{reverse iterated function system} (r.i.f.s.) to be a set of $m>1$
expansive maps
\[
T:=\{t_{i}:M\rightarrow M|i=1,2,\dots,m\}
\]
acting on a locally compact discrete (i.e. every point is isolated) metric
space $M$.\textit{ }We write $T$ and $t_{i}$ in place of $F$ and $f_{i}$ to
distinguish this special kind of i.f.s. A mapping $t_{i}:M\rightarrow M$ is
said to be expansive if there is a constant $r>1$ such that $d(t_{i}%
x,t_{i}y)\geq rd(x,y)$ for all $x\neq$ $y$ in $M.$ An expansive mapping is
necessarily one-to-one and has at most one fixed point.

In this case Strichartz's large scaling structures are the invariant sets of
r.i.f.s.; that is, sets $S\subset M$ which obey
\[
S=T(S)=\bigcup\limits_{i=1}^{m}t_{i}\left(  S\right)  .
\]
By requiring that $M$ is discrete, Strichartz restricts the possible invariant sets to be discrete.

Let $P$ be the fixed points of $\{t_{\mathbf{i}|k}:M\rightarrow M|k\in
\mathbb{N},\mathbf{i}\in\Sigma\}$. Contrast Theorem \ref{theorem2} with
Theorem \ref{theorem2x}.

\begin{theorem}
[Strichartz]\label{theorem2}A set is invariant for a r.i.f.s. if and only if
it is a finite union of forward orbits of points in $P$. In particular,
invariant sets exist if and only if $P$ is nonempty, and there are at most a
finite number of invariant sets.
\end{theorem}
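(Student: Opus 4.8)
The plan is to prove the two directions of the characterization separately, treating the easy inclusion first and isolating the confinement argument that does the real work.

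For the direction that every finite union of forward orbits of points of $P$ is invariant, it suffices to prove it for a single orbit, since $T(\bigcup_{\alpha}S_{\alpha})=\bigcup_{\alpha}T(S_{\alpha})$, so any union of invariant sets is invariant. Fix $p\in P$, say $t_{\mathbf{i}|k}(p)=p$, and write $O(p)=\{t_{\mathbf{j}|n}(p):\mathbf{j}\in\Sigma,\ n\in\mathbb{N}\}$ for its forward orbit. The inclusion $T(O(p))\subseteq O(p)$ is immediate, because $t_{i}\circ t_{\mathbf{j}|n}$ is again a composition applied to $p$. For the reverse inclusion note first that $p=t_{\mathbf{i}|k}(p)\in O(p)$; then any $t_{\mathbf{j}|n}(p)$ with $n\ge 1$ equals $t_{j_{1}}\bigl(t_{\sigma\mathbf{j}|(n-1)}(p)\bigr)$ when $n\ge 2$, or $t_{j_{1}}(p)$ when $n=1$, and in both cases the inner point lies in $O(p)$, so $t_{\mathbf{j}|n}(p)\in t_{j_{1}}(O(p))\subseteq T(O(p))$. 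Hence $O(p)=T(O(p))$.

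For the converse, let $S$ be a nonempty invariant set and $s\in S$. Since $s\in\bigcup_{i}t_{i}(S)$ and each $t_{i}$ is injective, we may choose $i_{1}$ and a unique $s_{1}\in S$ with $t_{i_{1}}(s_{1})=s$; iterating produces a backward trajectory $s=s_{0},s_{1},s_{2},\dots$ in $S$ with $t_{i_{k}}(s_{k})=s_{k-1}$, equivalently $s_{k}=(t_{\mathbf{i}|k})^{-1}(s)$. The crucial point is that each $t_{i}^{-1}$ is a contraction with ratio $r^{-1}<1$. Comparing the orbit to the fixed points $p_{i}$ of the maps (of which there are at most $m$, since an expansive map has at most one fixed point) gives $d(s_{k},p_{i_{k}})\le r^{-1}d(s_{k-1},p_{i_{k}})$, and a short geometric estimate with $R=\max_{i,j}d(p_{i},p_{j})$ confines the whole backward trajectory to a ball whose radius depends only on $r$, $R$ and $d(s_{0},p_{i_{1}})$. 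By local compactness and discreteness this ball is a finite set, so the trajectory must repeat: $s_{a}=s_{b}$ for some $a<b$. Setting $q:=s_{a}$ we get $q=t_{i_{a+1}}\cdots t_{i_{b}}(q)$, so $q\in P$, while $s=t_{\mathbf{i}|a}(q)\in O(q)$; moreover $q\in S$ and $O(q)\subseteq S$ by invariance. Thus $S=\bigcup_{q\in P\cap S}O(q)$.

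It remains to see that this union is finite and that there are finitely many invariant sets. Applying the confinement estimate to a periodic point $q$ itself, where the recursion for $d(\,\cdot\,,p_{i})$ admits only its bounded periodic solution, shows that every point of $P$ lies in one fixed bounded set $B^{\ast}$ depending only on $r$ and $R$; being bounded, $B^{\ast}$ is finite. Since the orbits around a single cycle all coincide ($q_{0}=t_{i_{a+1}}(q_{1})$ gives $O(q_{0})=\cdots=O(q_{p-1})$), each $S$ is a finite union of orbits of points of $P$, and as $S$ is determined by the finite set $P\cap S\subseteq B^{\ast}$ it contains, there are at most $2^{|B^{\ast}|}$ invariant sets. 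Finally, invariant sets exist precisely when $P\neq\emptyset$: if $P=\emptyset$ the converse shows no nonempty invariant set exists, while if $p\in P$ then $O(p)$ is one. The main obstacle is this confinement step: turning ``$t_{i}^{-1}$ contracts'' into a genuine bound on the backward trajectory, and then using discreteness together with local compactness to pass from \emph{bounded} to \emph{finite}. Two wrinkles deserve care: maps with no fixed point are handled by running the same contraction-with-drift estimate against a fixed base point rather than against $p_{i_{k}}$, and the reduction to a finite union rests squarely on the uniform set $B^{\ast}$ being finite, which is exactly where the hypotheses that $M$ is discrete and locally compact are essential.
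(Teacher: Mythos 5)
The paper itself states Theorem \ref{theorem2} without proof, attributing it to \cite{strichartz}, so there is no internal proof to compare against; measured against Strichartz's original argument, your proposal is correct and takes essentially the same route: pull each point of an invariant set backwards through the system, use the fact that the inverse maps contract with ratio $r^{-1}<1$ to confine the backward trajectory to a bounded set, use finiteness of bounded sets to force a repetition and hence a periodic point, and then run the same estimate around a cycle to trap all of $P$ in one bounded set $B^{\ast}$, giving finiteness of $P$, the decomposition $S=\bigcup_{q\in P\cap S}O(q)$, and the bound $2^{|P|}$ on the number of invariant sets via the injectivity of $S\mapsto P\cap S$. Your patch for maps without fixed points --- running the contraction-with-drift estimate against a base point $x_{0}$, i.e.\ $d(s_{k},x_{0})\le r^{-1}d(s_{k-1},x_{0})+r^{-1}\max_{i}d(x_{0},t_{i}(x_{0}))$ --- is the right fix, since $R=\max_{i,j}d(p_{i},p_{j})$ may be undefined when some $t_{i}$ has no fixed point.

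One step should be stated as a hypothesis rather than derived: ``by local compactness and discreteness this ball is a finite set'' is not literally a consequence of those two properties. Every discrete metric space is locally compact (singletons are compact neighborhoods), and a bounded, discrete, locally compact space can be infinite --- for instance $\{1/n\,|\,n\in\mathbb{N}\}$ with the metric inherited from $\mathbb{R}$. What the argument needs, and what Strichartz's hypotheses are understood to mean (the paper under review inherits the same loose phrasing), is that bounded subsets of $M$ are finite, i.e.\ that $M$ is a proper (boundedly compact) discrete space. With that reading made explicit, both your confinement step and your finiteness of $B^{\ast}$ are sound, and the proof is complete.
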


EXAMPLE 1 Let
$M=\mathbb{Z}$, $T=\{t_{i}:M\rightarrow M;t_{1}(x)=2x,t_{2}(x)=2x-1\}.$ 
It is readily verified that $M$ is invariant for this r.i.f.s, $T.$ It consists of
the forward orbits of the fixed points of $t_{1}$ and $t_{2}.$

EXAMPLE 2
Strichartz presents the following example of a r.i.f.s. Let $M$ be the set of
integer lattice points $\mathbb{Z}^{2}$ in the plane, lying between or on the
lines $y=\rho x$ and $y=\rho x+1$ where $\rho+\rho^{2}=1,$ $\rho=\left(
\sqrt{5}-1\right)  /2.$ The r.i.f.s. comprises the two maps%
\[
t_{1}(x,y)=(-x-y,-x),t_{2}(x,y)=(1-x-y,1-x).
\]
These maps are expansive on $M$, even though when viewed as transformations
acting on $\mathbb{R}^{2},$ they contract pairs of points that lie on any
straight line with slope $-1/\rho$. The fixed point of $t_{1}$ is $(0,0)$ and
of $t_{2}$ is $(0,1),$ both of which lie in $M.$ The union of the forward
orbits of these two points is $M$. So this unlikely looking set of discrete
points is invariant under the r.i.f.s.

This example yields, by projection onto the line $y=\rho x,$ an example of a
quasi-periodic linear tiling using tiles of lengths $\rho$ and $1+\rho.$
Strichartz also points out that by projection onto the perpendicular line
$y=-x/\rho$ of a natural measure on $M$ one obtains, after renormalizing, the
unique self-similar measure on $[0,1]$ associated with the overlapping i.f.s.
$f_{1}(x)=\rho x$, $f_{2}(x)=\rho x+1$ with equal probabilities.

Theorem \ref{theorem2} leads one to wonder: What are the invariant sets of an
i.f.s.? Usually the focus is on compact invariant sets, namely attractors. The
following Theorem is simply a list of some of the invariant sets of a
contractive i.f.s. The wealth of such invariants here stands in sharp contrast
to Theorem \ref{theorem2}.

\begin{theorem}
[Some Invariant Sets of an i.f.s.]\label{theorem2x}Let $F$ be a contractive
i.f.s. of invertible maps on $\mathbb{R}^{n}.$ If $S\subset\mathbb{R}^{n}$ is
invariant and bounded, then either $\overline{S}=\emptyset,$ or $\overline
{S}=A.$ The followings sets are invariant.

\begin{enumerate}
\item The attractor $A,$ and the whole space $\mathbb{R}^{n}.$

\item The forward orbit under $F$ of any periodic point $p\in P.$

\item The set of disjunctive points of $A$.

\item The orbit of any $x\in\mathbb{R}^{n}$ under the free group generated by
the maps of $F$ and their inverses.

\item The union of any collection of invariant sets.
\end{enumerate}
\end{theorem}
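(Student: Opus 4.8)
The plan is to treat the opening dichotomy separately from the enumerated list, since the dichotomy is the only part that uses more than elementary set manipulation. For the dichotomy, suppose $S$ is invariant and bounded. I would first show that $\overline{S}$ is again invariant: because $S$ is bounded, $\overline{S}$ is compact, so each $f_i(\overline{S})$ is compact and hence closed, giving $\overline{f_i(S)}=f_i(\overline{S})$; since closure commutes with the finite union $\bigcup_i f_i(S)$, we obtain $\overline{S}=\overline{F(S)}=\bigcup_i f_i(\overline{S})=F(\overline{S})$. Thus $\overline{S}$ is a compact invariant set. If $S=\emptyset$ then $\overline{S}=\emptyset$; otherwise $\overline{S}$ is a nonempty compact invariant set, and Hutchinson's uniqueness of the attractor among nonempty compact invariant sets forces $\overline{S}=A$. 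This yields the stated alternative.

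For the list, each item reduces to checking $S=\bigcup_i f_i(S)$ directly. For item 1, $A$ is invariant by definition as the attractor, and $\mathbb{R}^n$ is invariant because each invertible $f_i$ is a surjection of $\mathbb{R}^n$ onto itself, so $\bigcup_i f_i(\mathbb{R}^n)=\mathbb{R}^n$. Item 5 is immediate: for any family $\{S_\alpha\}$ of invariant sets, images distribute over arbitrary unions, so $F(\bigcup_\alpha S_\alpha)=\bigcup_\alpha F(S_\alpha)=\bigcup_\alpha S_\alpha$. For item 4, writing $G$ for the free group generated by the $f_i$ and their inverses and $G(x)$ for the orbit, the inclusion $\bigcup_i f_i(G(x))\subseteq G(x)$ holds because $f_ig\in G$; conversely any $g(x)$ equals $f_1\big(f_1^{-1}g(x)\big)$ with $f_1^{-1}g\in G$, so $g(x)\in f_1(G(x))$. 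The group structure, i.e.\ closure under inverses, is exactly what makes the reverse inclusion work.

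The two items that invoke the structural facts recorded in the Preliminaries are 2 and 3. For item 2, the forward orbit $O(p)=\{f_{\mathbf{w}}(p):\mathbf{w}\in\Sigma_{\mathbb{N}}\}$ satisfies $\bigcup_i f_i(O(p))=\{f_{\mathbf{w}}(p):\mathbf{w}\in\Sigma_{\mathbb{N}},\ |\mathbf{w}|\ge 2\}$, so the only point to verify is that each length-one image $f_i(p)$ is also realised as a longer image; this is where periodicity enters, since $f_{\mathbf{u}}(p)=p$ gives $f_i(p)=f_{i\mathbf{u}}(p)$ of length $\ge 2$. For item 3, set $D=\pi(\Sigma_{dis})$; the semiconjugacy $f_i\circ\pi=\pi\circ i$ gives $\bigcup_i f_i(D)=\pi\big(\bigcup_i i(\Sigma_{dis})\big)=\pi\big(\sigma^{-1}(\Sigma_{dis})\big)$, and because prepending or deleting a single symbol preserves disjunctiveness we have $\sigma^{-1}(\Sigma_{dis})=\Sigma_{dis}$, whence $\bigcup_i f_i(D)=D$. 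I expect the main obstacle to be conceptual rather than computational: making the dichotomy rigorous requires knowing that passing to the closure keeps the set invariant, so that $\overline{S}$ is a genuine nonempty compact invariant set, and then invoking Hutchinson's theorem in the strong form that \emph{every} nonempty compact invariant set equals $A$, not merely that iterates converge to $A$.
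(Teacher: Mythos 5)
Your proof is correct. Note that the paper itself offers no proof of this theorem: it is stated as ``simply a list of some of the invariant sets,'' so there is no argument of the authors' to compare against; your verifications are exactly the ones implicit in the paper's Preliminaries. In particular, your handling of the only two points that need genuine care is sound: for the dichotomy you use compactness of $\overline{S}$ to get $f_i(\overline{S})=\overline{f_i(S)}$ (continuity alone gives just one inclusion) and then invoke Hutchinson's theorem in the form the paper records it, namely that $A$ is the \emph{unique} nonempty closed bounded invariant set; and for item 3 your identity $\bigcup_i i(\Sigma_{dis})=\sigma^{-1}(\Sigma_{dis})=\Sigma_{dis}$ rests on the fact that deleting the first symbol preserves disjunctiveness, which holds because every finite word occurs infinitely often in a disjunctive string --- a half-line justification worth adding, though the paper asserts the equivalent total shift-invariance $\sigma(\Sigma_{dis})=\Sigma_{dis}$, which you may simply cite. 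The remaining items (periodicity supplying the length-one images via $f_i(p)=f_{i\mathbf{u}}(p)$, the free-group trick $g=f_1\circ(f_1^{-1}g)$, surjectivity of the invertible $f_i$ for $\mathbb{R}^n$, and images distributing over unions) are all exactly right.
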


There are other invariant sets. For example, let $A$ be a Sierpinski triangle,
the attractor of an i.f.s. $F_{sierp}$ in the usual way. Let $B$ be the union
of the sides of all triangles in $A.$ Then $B$ is an invariant set for
$F_{sierp}$. It is not covered by Theorem \ref{theorem2x}.

We note that the invariant set in (4) is also invariant under the
\textit{inverse i.f.s.}
\[
F^{-1}:=\left\{  f_{i}^{-1}:\mathbb{R}^{n}\rightarrow\mathbb{R}^{n}%
|i=1,\dots,m\right\}  .
\]

The orbit under $F^{-1}$ of the attractor $A$ is invariant under
$F^{-1}.$ This set may be referred to as the \textit{fast basin} of $A$ with respect
to $F$, see \cite{fastbasin}. It is an example of a set which is
\textquotedblleft invariant in the large\textquotedblright, admitted when
Strichartz' constraint, that the underlying space is discrete and locally
compact, is lifted.

Figure \ref{fastbasin} illustrates the fast basin associated with (left) a
Sierpinski triangle i.f.s. and (right) a different i.f.s. of three similitudes
of scaling factor $1/2.$ Fast basins are interesting from a computational
point of view, because they comprise the points $x$ in $\mathbb{R}^{n}$ for
which there is an address $\mathbf{j\in}\Sigma_{\mathbb{N}}$ such that
$f_{\mathbf{j}}(x)\in A$.

\begin{figure}[ptb]%
\centering
\includegraphics[
height=2.2in,
width=4.4529in
]%
{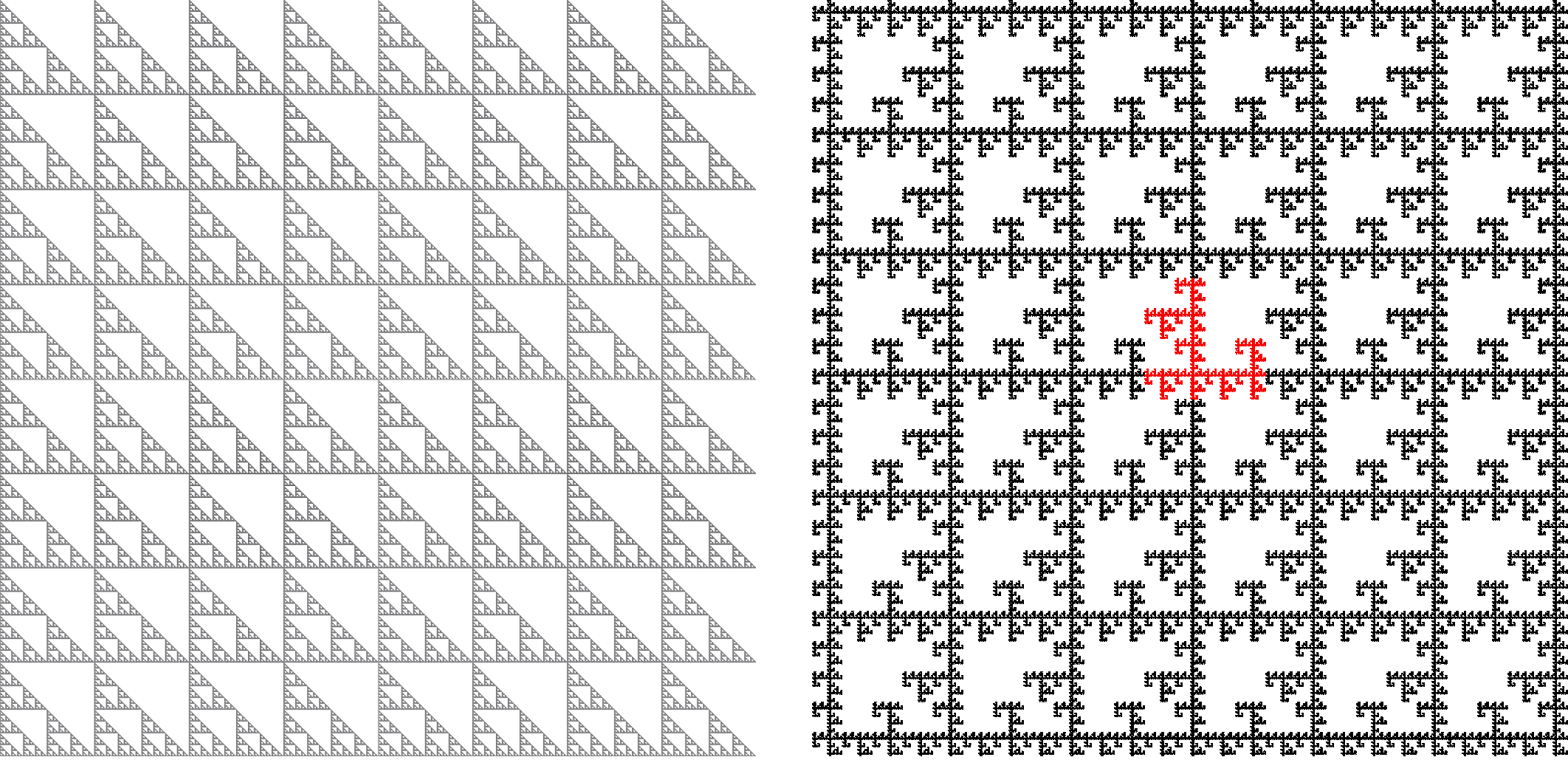}%
\caption{Two examples of invariant sets of inverse iterated function systems. The left image illustrates part of the fast basin of a Sierpinski triangle i.f.s. The right image illustrates the fast basin of an i.f.s. whose attractor is illustrated in red. These unbounded  sets are "invariant in the large" but are not discrete.}%
\label{fastbasin}%
\end{figure}

\section{\label{blowupsec}Strichartz' fractal blowups}

Strichartz uses r.i.f.s. to analyze the structure of what he christened
\textquotedblleft fractal blowups\textquotedblright. These structures have
been used to develop differential operators on unbounded fractals, see for
example \cite{strichartz2, teplyaev}.

Let $F$ be an i.f.s. of similitudes. The maps take the form
\[
f_{j}\left(  x\right)  =r_{j}U_{j}x+b_{j}%
\]
where $0<r_{j}<1,$ $b_{j}\in\mathbb{R}^{n}$ and $U_{j}$ is an orthogonal
transformation. It is convenient to write $r_{j}=r^{a_{j}}$ where
$r=\max\left\{  r_{j}\right\}  $, so that $1\leq a_{j}<a_{\max}$ . A
\textit{blowup }$\mathcal{A}$ of $A$ is the union of an increasing sequence of
sets
\begin{equation}
A=A_{0}\subset A_{1}\subset A_{2}\subset\dots\label{equationA}%
\end{equation}
where $A_{j}=f_{-\mathbf{k}|j}\left(  A\right)  $ for some fixed
$\mathbf{k}\in\Sigma$ and all $j\in\mathbb{N}$. We have
\begin{equation}
\mathcal{A=A}\left(  \mathbf{k}\right)  =\bigcup\limits_{j=1}^{\infty
}f_{-\mathbf{k}|j}\left(  A\right)  . \label{equation1}%
\end{equation}
Strichartz starts with a more general definition of a blowup, but restricts
consideration to the one given here.

\begin{theorem}
[Strichartz]\label{strichtheorem}Let $\mathcal{A}\left(  \mathbf{k}\right)  $
be a blowup of $A$ of the form in Equation (\ref{equation1}) and assume $F$
satisfies the OSC. Then $\mathcal{A}\left(  \mathbf{k}\right)  $ is the union
of sets $\mathcal{G}_{n}$ which are similar to $A$ with the contraction ratios
bounded from above and below, and the number of sets $\mathcal{G}_{n}$ that
intersect any ball of radius $R$ is at most a multiple of $R^{n}.$ In
particular the union $\mathcal{A}\left(  \mathbf{k}\right)  =\cup
_{n=1}^{\infty}\mathcal{G}_{n}$ is locally finite, and the intersection of
$\mathcal{A}\left(  \mathbf{k}\right)  $ with any compact set is equal to the
intersection of $\cup_{n=1}^{N}\mathcal{G}_{n}$ with that compact set for
large $N$.
\end{theorem}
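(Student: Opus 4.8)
The plan is to build the family $\{\mathcal{G}_{n}\}$ explicitly by a stopping-time subdivision that holds every piece at unit scale, and then to extract local finiteness and the counting bound from the OSC by a volume-packing argument. First I would record the scale bookkeeping: writing $S_{j}=\sum_{l=1}^{j}a_{k_{l}}$, the similitude $f_{-\mathbf{k}|j}$ has expansion factor $r^{-S_{j}}$, so for a finite word $\mathbf{w}=w_{1}\cdots w_{N}$ the composite $f_{-\mathbf{k}|j}f_{\mathbf{w}}$ has scale $r^{\sum_{l\leq N}a_{w_{l}}-S_{j}}$. For each stage $j\geq1$ and each first letter $i\neq k_{j}$ I stop a word $\mathbf{w}$ (with $w_{1}=i$) at the least $N$ for which $\sum_{l\leq N}a_{w_{l}}\geq S_{j}$; minimality together with $a_{w_{N}}<a_{\max}$ forces the scale of $f_{-\mathbf{k}|j}f_{\mathbf{w}}$ into the fixed interval $(r^{a_{\max}},1]$. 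The stopped words form a finite antichain $W_{j}$ (the tree is finite-branching and every branch stops at depth $\leq S_{j}$), and $\{\mathcal{G}_{n}\}$ is the enumeration of $A$ together with the tiles $f_{-\mathbf{k}|j}f_{\mathbf{w}}(A)$, $\mathbf{w}\in W_{j}$, $j\geq1$, ordered by stage (finitely many per stage). Each is similar to $A$ with ratio in $(r^{a_{\max}},1]$, which is bounded away from $0$ and from $\infty$, giving the first clause.

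That this family covers $\mathcal{A}(\mathbf{k})$ I would get from the self-refinement $A=\bigcup_{i}f_{i}(A)$. Since $f_{-\mathbf{k}|j}f_{k_{j}}=f_{-\mathbf{k}|(j-1)}$, splitting off the $k_{j}$-branch and then subdividing by $W_{j}$ gives
\[
A_{j}=A_{j-1}\cup\bigcup_{i\neq k_{j}}f_{-\mathbf{k}|j}f_{i}(A)=A_{j-1}\cup\bigcup_{\mathbf{w}\in W_{j}}f_{-\mathbf{k}|j}f_{\mathbf{w}}(A).
\]
Inducting on $j$ and taking the union yields $\mathcal{A}(\mathbf{k})=\bigcup_{n}\mathcal{G}_{n}$, with every tile contained in some $A_{j}\subset\mathcal{A}(\mathbf{k})$.

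The heart of the argument is the counting bound, and here the OSC enters. Fix an open set $O$ as in the OSC and a ball $B(x_{0},\rho)\subset O$, and write each tile as $\mathcal{G}_{n}=g_{n}(A)$ for the corresponding similitude $g_{n}$. From $\bigcup_{i}f_{i}(O)\subset O$ (disjoint union) and the bijectivity of $f_{-\mathbf{k}|j}$ on $\mathbb{R}^{n}$, the antichain property makes the sets $g_{n}(O)$ pairwise disjoint within one stage, while the nesting $O\subset f_{k_{j}}^{-1}(O)$ makes them disjoint across stages: the stage-$j$ tiles occupy the branches $f_{i}(O)$, $i\neq k_{j}$, whereas all earlier tiles occupy, under $f_{-\mathbf{k}|j}$, the complementary branch $f_{k_{j}}(O)$, so one application of the OSC separates them. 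Hence the balls $g_{n}(B(x_{0},\rho))$ are pairwise disjoint, each of radius $\geq r^{a_{\max}}\rho$ and so of $n$-volume $\geq v_{0}>0$, and each centered within a fixed distance of $\mathcal{G}_{n}$ (since $\mathrm{dist}(x_{0},A)$ and $\mathrm{diam}(A)$ are fixed and scales are $\leq1$). For a ball $B$ of radius $R$, every tile meeting $B$ contributes such a disjoint ball inside a concentric ball of radius $R+O(1)$, so a volume count bounds the number of tiles meeting $B$ by a multiple of $(R+O(1))^{n}$, hence by $CR^{n}$ for $R\geq1$.

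Local finiteness is then immediate, and for the final clause I would argue that a compact $K$ lies in some ball $B_{R}$, which by the count meets only finitely many tiles; as each of these carries a finite stage index, they all occur among $\mathcal{G}_{1},\dots,\mathcal{G}_{N}$ with $N$ the largest such index, whence $\mathcal{A}(\mathbf{k})\cap K=\bigl(\bigcup_{n\leq N}\mathcal{G}_{n}\bigr)\cap K$. The main obstacle I anticipate is neither the covering nor the volume estimate but the bookkeeping that renders the companion open sets $g_{n}(O)$ genuinely pairwise disjoint across \emph{all} stages at once: within a stage this is the familiar OSC antichain argument, but across stages one must track how the inverse maps $f_{-\mathbf{k}|j}$ nest the open sets and verify that each newly created stage-$j$ tile avoids the single branch $f_{k_{j}}(O)$ that carries the entire earlier construction.
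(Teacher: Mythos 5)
Your proposal is correct and is essentially the paper's own argument: your stopping rule $\sum_{l\leq N}a_{w_l}\geq S_j$ with minimality is exactly the paper's condition $\eta^{-}(\mathbf{m})<\eta(\mathbf{k}|l)\leq\eta(\mathbf{m})$, your stage-$j$ tiles with first letter $i\neq k_j$ are precisely the new elements of the paper's nested collections $\Pi_{S}(\mathbf{k}|n)$ (where words with $m_1=k_{n+1}$ cancel into the previous level), and the volume-packing count from the OSC is the same. The only difference is one of completeness: the cross-stage disjointness you flag as the main obstacle, which you resolve correctly by placing all earlier tiles inside the branch $f_{-\mathbf{k}|j}f_{k_j}(O)$, is exactly the step the paper compresses into ``It follows that $\{\Pi_{\mathcal{O}}(\mathbf{k}|n)\}$ is a nested increasing sequence of disjoint open sets.''
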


\begin{proof}
We outline a proof for the case of a single scaling factor $0<r<1\ $with
$f_{i}(x)=rU_{i}x+b_{i}$. At heart, our proof is the same as Strichartz, but
we introduce notation that helps with our generalization to overlapping i.f.s.
in Section \ref{topsec1}.

Since $F$ satisfies the OSC, there is a bounded open set $\mathcal{O}$ such
that $A\subset$ $\overline{\mathcal{O}},$ $f_{i}(\mathcal{O)\subset O}$ for
all $i,$ $f_{i}(\mathcal{O)\cap}$ $f_{j}(\mathcal{O)=\emptyset}$ for all
$i\neq j.$ Note that the latter condition implies that the sets in $\left\{
f_{j_{1}j_{2}...j_{l}}(\mathcal{O)}|j_{1}j_{2}...j_{l}\in\Sigma_{l}\right\}  $
are disjoint.

Define a collection of sets%
\[
\Pi_{S}(\mathbf{k|}n):=\left\{  f_{-\mathbf{k}|n}f_{\mathbf{m|}n}%
(S)|\mathbf{m}\in\Sigma\right\}
\]
where $S$ may be $\mathcal{O}$, $\overline{\mathcal{O}},$ or $A.$ Observe
that
\[
\Pi_{A}(\mathbf{k|}1)\subset\Pi_{A}(\mathbf{k|}2)\subset...
\]
and
\[
f_{-\mathbf{k}|l}\left(  A\right)  =\bigcup\limits_{n=1}^{\l}\Pi
_{A}(\mathbf{k|}n).
\]
Also
\[
\Pi_{\mathcal{O}}(\mathbf{k|}\left(  n+1\right)  )\backslash\Pi_{\mathcal{O}%
}(\mathbf{k|}n)=\left\{  f_{-\mathbf{k}|n+1}f_{\mathbf{m|}\left(  n+1\right)
}(\mathcal{O})|\mathbf{m}\in\Sigma_{n+1},k_{n+1}\neq m_{1}\right\}
\]
consists of $m^{n}(m-1)$ disjoint open sets. It follows that $\left\{
\Pi_{\mathcal{O}}(\mathbf{k|}n)|n=1,2,\dots\right\}  $ is a nested increasing
sequence of disjoint open sets, whose closed union contains $\mathcal{A}%
\left(  \mathbf{k}\right)  $. The closure of each open set contains a copy of
$A.$ Since each open set has volume bounded below by a positive constant,
local finiteness is assured.

A general case of a Strichartz style blowup is captured by defining
\begin{align*}
\Pi_{S}\left(  \mathbf{k|}j\right)   &  =f_{-\mathbf{k}|l}(\{f_{\mathbf{m}%
}(S)|\eta^{-}(\mathbf{m})<\eta(\mathbf{k}|l)\leq\eta(\mathbf{m}),\mathbf{m}%
\in\Sigma_{\mathbb{N}}\})\\
\Pi_{S}\left(  \mathbf{k}\right)   &  =\bigcup\limits_{j\in\mathbb{N}}\Pi
_{S}\left(  \mathbf{k|}j\right)
\end{align*}
where
\begin{align*}
\eta^{-}(m_{1}m_{2}\dots m_{n}) &  =a_{m_{1}}+a_{m_{2}}+\dots+a_{n-1}\\
\eta(m_{1}m_{2}\dots m_{n}) &  =a_{m_{1}}+a_{m_{2}}+\dots+a_{n}%
\end{align*}
These formulas provide a specific form to Strichartz' stopping time argument.
Using these more general expressions one obtains, for fixed $\mathbf{k}$, an
increasing union of copies of $A$ scaled by factors that lie between
$r^{a_{\max}}$ and $r.$ See for example \cite{tilings, polygon}. The argument
concerning local finiteness is essentially the same as above.
\end{proof}

Strichartz unites his two ideas, reverse i.f.s. and blowups, by considering
the case where $f_{j}\left(  x\right)  =rx+b_{j}$ for all $j,$ and studying
the blowup $\mathcal{A}\left(  \overline{\mathbf{1}}\right)  $ where
$\overline{\mathbf{1}}\mathbf{=}$ $111\dots,$ that is
\[
\mathcal{A}\left(  \overline{\mathbf{1}}\right)  =\cup_{n=1}^{\infty}%
(f_{1}^{-1})^{n}A
\]

\begin{theorem}
[Strichartz combines r.i.f.s. and blowups]\label{theorem4}Let $f_{j}%
x=rx+b_{j}.$ Then $\mathcal{A}\left(  \overline{\mathbf{1}}\right)  =A+D$
where $D$ is an invariant set for the r.i.f.s.
\[
t_{j}\left(  x\right)  =r^{-1}(x+b_{j}-b_{1}),j=1,2,...,m
\]
Specifically, $D$ is the forward orbit of $0$, the fixed point of $t_{1}.$
\end{theorem}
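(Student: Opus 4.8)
The plan is to reduce the theorem to a single clean set identity proved by induction, namely
\[
(f_{1}^{-1})^{n}A=\bigcup_{\mathbf{i}\in\Sigma_{n}}\bigl(A+t_{\mathbf{i}}(0)\bigr),\qquad n\in\mathbb{N},
\]
where for a finite string $\mathbf{i}=i_{1}i_{2}\dots i_{n}\in\Sigma_{n}$ I write $t_{\mathbf{i}}=t_{i_{1}}\circ\dots\circ t_{i_{n}}$. Everything rests on the equal-ratio hypothesis $f_{j}x=rx+b_{j}$, which makes $f_{1}^{-1}x=r^{-1}(x-b_{1})$ affine with linear part $r^{-1}$. A direct computation gives the two facts I would use repeatedly: first, $f_{1}^{-1}f_{j}(x)=x+r^{-1}(b_{j}-b_{1})=x+t_{j}(0)$, so applying this to the self-reference $A=\bigcup_{j}f_{j}(A)$ yields the base case $f_{1}^{-1}A=\bigcup_{j}\bigl(A+t_{j}(0)\bigr)$; and second, $f_{1}^{-1}$ scales translations uniformly, $f_{1}^{-1}(y+c)=f_{1}^{-1}(y)+r^{-1}c$.

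For the inductive step I would assume the identity for $n$, apply $f_{1}^{-1}$, and distribute it over the union. Each term becomes $f_{1}^{-1}(A)+r^{-1}t_{\mathbf{i}}(0)=\bigcup_{j}\bigl(A+t_{j}(0)+r^{-1}t_{\mathbf{i}}(0)\bigr)$ by the base case and the translation-scaling rule. The bookkeeping identity that closes the argument is
\[
t_{j}(0)+r^{-1}t_{\mathbf{i}}(0)=r^{-1}\bigl(t_{\mathbf{i}}(0)+b_{j}-b_{1}\bigr)=t_{j}\bigl(t_{\mathbf{i}}(0)\bigr)=t_{j\mathbf{i}}(0),
\]
which shows that prepending an index $j$ to $\mathbf{i}\in\Sigma_{n}$ produces exactly the r.i.f.s. orbit points indexed by $\Sigma_{n+1}$, with no omission or repetition. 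Hence the identity holds for $n+1$.

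Taking the union over all $n$ and using that translation distributes over arbitrary unions, I obtain $\mathcal{A}\left(\overline{\mathbf{1}}\right)=\bigcup_{n}(f_{1}^{-1})^{n}A=A+D$ with $D=\{t_{\mathbf{i}|n}(0)\mid\mathbf{i}\in\Sigma,\,n\in\mathbb{N}\}$, which is precisely the forward orbit of $0$ under $T$. That $0$ is the fixed point of $t_{1}$ is immediate, since $t_{1}x=r^{-1}x$. That $D$ is invariant for $T$ then follows from Theorem \ref{theorem2}, a single forward orbit of a fixed point being invariant; alternatively one checks $T(D)=D$ directly, the only subtlety being that the length-one orbit points $t_{j}(0)$ reappear as length-two points $t_{j}t_{1}(0)$ because $t_{1}(0)=0$.

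I expect the main obstacle to be organizational rather than conceptual: keeping the composition order consistent and verifying that the affine conjugation in the inductive step attaches the new index on the correct side, so that the index set advances from $\Sigma_{n}$ to $\Sigma_{n+1}$ exactly once over. The equal-ratio hypothesis is genuinely essential here, as it is what forces $f_{1}^{-1}$ to rescale every translational part by the same factor $r^{-1}$ and thereby lets those parts assemble into honest $T$-orbits; for unequal ratios this mechanism breaks and one must instead fall back on Strichartz' stopping-time argument used in Theorem \ref{strichtheorem}.
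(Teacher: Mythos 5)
Your proof is correct, and it is worth noting that the paper itself offers no proof of this theorem: it is stated as Strichartz' result and followed only by the remark that $\mathcal{A}\left(\overline{\mathbf{1}}\right)$ is a Minkowski sum, so your induction fills a genuine gap rather than paralleling an argument in the text. The key identity $(f_{1}^{-1})^{n}A=\bigcup_{\mathbf{i}\in\Sigma_{n}}\bigl(A+t_{\mathbf{i}}(0)\bigr)$ checks out: the base case $f_{1}^{-1}f_{j}(x)=x+t_{j}(0)$, the scaling rule $f_{1}^{-1}(y+c)=f_{1}^{-1}(y)+r^{-1}c$, and the bookkeeping step $t_{j}(0)+r^{-1}t_{\mathbf{i}}(0)=t_{j\mathbf{i}}(0)$ are all correct computations, and prepending $j$ does traverse $\Sigma_{n+1}$ bijectively. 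You were also right to flag the one subtlety in verifying $T(D)=D$ directly, namely that $t_{j}(0)=t_{j}t_{1}(0)$ recovers the length-one orbit points; equally, since $0$ is the fixed point of $t_{1}$, the point $0=t_{1}(0)$ lies in the forward orbit as the paper defines it (which excludes the empty word), so $A=A+0\subset A+D$ as needed. One remark on economy: the induction can be replaced by a single closed-form computation, since all maps share linear part $r$ one gets $f_{1}^{-n}f_{w_{1}\dots w_{n}}(x)=x+\sum_{k=1}^{n}r^{-(n-k+1)}(b_{w_{k}}-b_{1})=x+t_{\mathbf{w}}(0)$, which identifies each piece $f_{1}^{-n}f_{\mathbf{w}}(A)$ of the blowup as a translate of $A$ by an orbit point in one stroke; this is essentially Strichartz' original argument, and your inductive version is an equivalent reorganization of it. Your closing observation is also accurate, though slightly understated: the hypothesis $f_{j}x=rx+b_{j}$ excludes not only unequal ratios but also rotational parts $U_{j}$, and both exclusions are needed for the translational parts to assemble into orbits of the stated r.i.f.s.
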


That is, $\mathcal{A}\left(  \overline{\mathbf{1}}\right)  $ is the Minkowski
sum of the attractor of the i.f.s. and an invariant set of a r.i.f.s.

\section{\label{topsec1}Tops tilings}

In this Section we study tilings of fractal blowups in the case of overlapping
i.f.s. attractors. First, in Subsection \ref{ftopssubsection} we give relevant
theory of fractal tops. In Subsection \ref{topblowupsec} we show how fractal
tops may be used to generate tilings of fractal blowups for overlapping i.f.s.
The approach here is distinct from the one in \cite{tilings}. In Subsection
\ref{leafysec} we illustrate fractal tops for an i.f.s. of two maps, with
overlapping attractor that looks like a leaf, suggesting applications to
modelling of complicated real-world images.

\subsection{Fractal tops\label{ftopssubsection}}

Let $F$ be a strictly contractive i.f.s. acting on a complete metric space
$\mathbb{X}$, with maps $f_{i}$ and attractor $A.$ We assume that there are
two or more maps, at least two of which have different fixed points. Also all
of the maps are invertible.

\begin{lemma}
\label{lemma1}Let $C$ be a closed subset of $\Sigma.$ Let $\mathbf{j=}$
$\max\{\mathbf{k}\in C\}.$ Then $\mathbf{j}=j_{1}\max\{\mathbf{m}\in
\Sigma|(j_{1}\mathbf{m)\in}C\}.$
\end{lemma}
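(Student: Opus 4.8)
The plan is to read $\max$ as the lexicographic order on $\Sigma$: $\mathbf{k} < \mathbf{k}'$ when, at the first index $n$ at which they differ, $k_n < k_n'$. The statement then has two ingredients I would separate cleanly: first, that a lexicographic maximum of a nonempty closed (hence compact) subset of $\Sigma$ exists and is attained; second, the recursive identity relating $\max C$ to the max over admissible tails. Both follow from essentially the same digit-by-digit construction.

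First I would establish existence by building $\mathbf{j}$ one symbol at a time. Set $j_1 = \max\{k_1 : \mathbf{k}\in C\}$, which exists because the first-coordinate values lie in the finite set $\{1,\dots,m\}$, and put $C_1 = \{\mathbf{k}\in C : k_1 = j_1\}$. Inductively let $j_{n} = \max\{k_n : \mathbf{k}\in C_{n-1}\}$ and $C_n = \{\mathbf{k}\in C_{n-1}: k_n = j_n\}$. Each $C_n$ is nonempty and closed, and they are nested, so by compactness of $\Sigma$ the intersection $\bigcap_n C_n$ is nonempty; its unique element is $\mathbf{j} = j_1 j_2\cdots$, which lies in $C$. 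A short comparison argument shows $\mathbf{j} = \max C$: any $\mathbf{k}\in C$ with $\mathbf{k}\neq\mathbf{j}$ first differs from $\mathbf{j}$ at some index $n$, so $\mathbf{k}\in C_{n-1}$ and therefore $k_n \le j_n$ with strict inequality, giving $\mathbf{k} < \mathbf{j}$.

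Now for the recursive identity. I would first observe that the above $j_1$ really is the first symbol of $\mathbf{j}$, since an element of $C$ with a strictly larger first symbol would exceed $\mathbf{j}$ in lexicographic order. Hence every $\mathbf{k}\in C$ with $k_1 < j_1$ satisfies $\mathbf{k} < \mathbf{j}$, and the maximum of $C$ coincides with the maximum of $C_1 = \{\mathbf{k}\in C : k_1 = j_1\}$. Writing each element of $C_1$ as $j_1\mathbf{m}$ with $\mathbf{m}$ ranging over $D := \{\mathbf{m}\in\Sigma : j_1\mathbf{m}\in C\}$, the lexicographic comparison of two strings sharing the prefix $j_1$ reduces to the comparison of their tails, so $\max C_1 = j_1\,(\max D)$. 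Combining, $\mathbf{j} = j_1\max\{\mathbf{m}\in\Sigma : j_1\mathbf{m}\in C\}$, as claimed. For the right-hand max to be meaningful I would note that $D$ is closed and nonempty: it is the preimage of $C$ under the continuous (indeed contractive) prepending map $\mathbf{m}\mapsto j_1\mathbf{m}$, and it is nonempty because $\mathbf{j}\in C$ forces $\sigma(\mathbf{j})\in D$.

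The main obstacle is the attainment of the lexicographic maximum: a lexicographic order need not have attained suprema, so the argument must lean on compactness and on the fact that in $\Sigma$ the order interacts well with the product topology, namely that agreement on an initial block is a closed condition. The nested-compact-sets construction is exactly what turns a supremum into an attained maximum and simultaneously exposes the recursive structure, so I expect the remainder of the proof to be short once that construction is in place.
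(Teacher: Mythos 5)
Your proof is correct, and it establishes strictly more than the paper's own proof does. The paper treats the attainment of the maximum as given (the statement posits $\mathbf{j}=\max\{\mathbf{k}\in C\}$) and argues by splitting $C$ into the three closed sets $\{\mathbf{k}\in C\mid k_{1}>j_{1}\}$, $\{\mathbf{k}\in C\mid k_{1}=j_{1}\}$, and $\{\mathbf{k}\in C\mid k_{1}<j_{1}\}$, showing by contradiction that the first is empty and that the maximum cannot come from the third; the final factorization step, that the maximum over the middle set equals $j_{1}$ followed by the maximum over the tails, and the fact that $D=\{\mathbf{m}\in\Sigma\mid j_{1}\mathbf{m}\in C\}$ is closed and nonempty so that the inner maximum is meaningful, are left implicit. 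Your argument for the identity itself is the same first-symbol decomposition, carried out positively via the greedy choice $j_{1}=\max\{k_{1}:\mathbf{k}\in C\}$ rather than by contradiction, but you add two things the paper omits: the nested-compact-sets construction proving that a nonempty closed subset of $\Sigma$ attains its lexicographic maximum, and the verification that the prepending map $\mathbf{m}\mapsto j_{1}\mathbf{m}$ is continuous, making $D$ closed, with $\sigma(\mathbf{j})\in D$ giving nonemptiness. This extra content is exactly what the paper's application requires: the well-definedness of $\tau(x)=\max\{\mathbf{k}\in\Sigma\mid\pi(\mathbf{k})=x\}$, which the paper credits to Lemma \ref{lemma1}, in fact rests on attainment of the maximum over the closed set $\pi^{-1}(x)$, and it is your compactness argument, not the paper's contradiction argument, that supplies it. In short, the paper buys brevity by assuming attainment; your version buys a self-contained justification of $\tau$ at the cost of length.
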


\begin{proof}
$C$ is the union of the three closed sets $\{\mathbf{k}\in C|k_{1}>j_{1}%
\}$,$\{\mathbf{k}\in C|k_{1}=j_{1}\},$ and $\{\mathbf{k}\in C|k_{1}<j_{1}\}.$
The maximum over $C$ is the maximum of the maxima over these three sets. But
the set $\{\mathbf{k}\in C|k_{1}>j_{1}\}$ is empty, because if not then
$\max\{\mathbf{k}\in C\}\geq\max\{\mathbf{k}\in C|k_{1}>j_{1}\}>\mathbf{j}$
which is a contradiction. If $\max\{\mathbf{k}\in C\}=\max\{\mathbf{k}\in
C|k_{1}<j_{1}\}$ then $\mathbf{j>j}$, again a contradiction.
\end{proof}

Since $\pi:\Sigma\rightarrow A$ is continuous and onto, it follows that
$\pi^{-1}(x)$ is closed for all $x\in A.$ Lemma \ref{lemma1} tells us that a
map $\tau:A\rightarrow\Sigma$ and subset $\Sigma_{top}\subset\Sigma$ are
well-defined by
\[
\tau(x):=\max\{\mathbf{k\in}\Sigma|\pi(\mathbf{k)=}x\},\Sigma_{top}:=\tau(A).
\]
Conventionally the maximum here is with respect to lexicographical ordering.
We refer loosely to these objects and the ideas around them as relating to the
\textit{top }of $A.$ Formally, the top of $A$ is the graph of $\tau,$ namely
$\left\{  (x,\tau(x))|x\in A\right\}  $.

Top addresses of points in $A,$ namely points in $\Sigma_{top},$ can be
calculated by following the orbits of the shift map $\sigma:\Sigma
_{top}\rightarrow\Sigma_{top}.$ Simply partition $A$ into $A_{1}=f_{1}(A),$
$A_{2}=f_{2}(A)\backslash A_{1},$\ $A_{3}=f_{3}(A)\backslash(A_{1}$ $\cup
A_{2}),\dots,A_{m}=f_{m}(A)\backslash\cup_{n\neq m}A_{n}$. Define the orbit
$\left\{  x_{n}\right\}  _{n=1}^{\infty}$ of $x=x_{1}\in A,$ under the tops
dynamical system, by $x_{n+1}=f_{i_{n}}^{-1}(x_{n})$ where $i_{n}$ is the
unique index such that $x_{n}\in A_{i_{n}}$.

A version of the following observation can be found in \cite{tops}. See also
\cite{bandt}.

\begin{theorem}
\label{proposition1}The set of top addresses is shift invariant, according to
$\Sigma_{top}=\sigma\left(  \Sigma_{top}\right)  $ where $\sigma$ is the left shift.
\end{theorem}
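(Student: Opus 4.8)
The goal is the pair of inclusions $\sigma(\Sigma_{top})\subseteq\Sigma_{top}$ and $\Sigma_{top}\subseteq\sigma(\Sigma_{top})$. The plan is to argue both directly from the definition $\tau(x)=\max\pi^{-1}(x)$, the semiconjugacies $f_{i}\circ\pi=\pi\circ i$ and $\pi\circ\sigma(\mathbf{j})=f_{j_{1}}^{-1}\circ\pi(\mathbf{j})$, the invertibility of each $f_{i}$, and the identity $\mathbf{j}=j_{1}\sigma(\mathbf{j})$. The single observation that drives everything is that prepending a fixed symbol to an address is order preserving: two addresses sharing a first entry are compared lexicographically by their tails, so $k\mathbf{u}\le k\mathbf{v}$ iff $\mathbf{u}\le\mathbf{v}$.

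For $\sigma(\Sigma_{top})\subseteq\Sigma_{top}$, I would take $\mathbf{j}=\tau(x)\in\Sigma_{top}$ and set $y=f_{j_{1}}^{-1}(x)=\pi(\sigma\mathbf{j})$, which lies in $A$ since it is a value of $\pi$. Applying Lemma \ref{lemma1} to the closed set $C=\pi^{-1}(x)$ gives $\mathbf{j}=j_{1}\max\{\mathbf{m}\in\Sigma\mid j_{1}\mathbf{m}\in C\}$, so that $\sigma\mathbf{j}=\max\{\mathbf{m}\mid j_{1}\mathbf{m}\in C\}$. The key identification is $\{\mathbf{m}\mid j_{1}\mathbf{m}\in C\}=\pi^{-1}(y)$: indeed $\pi(j_{1}\mathbf{m})=f_{j_{1}}(\pi\mathbf{m})$ equals $x$ precisely when $\pi\mathbf{m}=f_{j_{1}}^{-1}(x)=y$, using invertibility. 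Hence $\sigma\mathbf{j}=\max\pi^{-1}(y)=\tau(y)\in\Sigma_{top}$.

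For $\Sigma_{top}\subseteq\sigma(\Sigma_{top})$, the task is to exhibit, for each $\mathbf{j}=\tau(x)\in\Sigma_{top}$, a preimage under $\sigma$ that is again a top address; since $\sigma^{-1}(\mathbf{j})=\{i\mathbf{j}\mid i=1,\dots,m\}$, I must choose the correct first symbol $i$. The trick is to prepend the largest symbol $m$ and claim $m\mathbf{j}=\tau(f_{m}(x))$. First, $m\mathbf{j}\in\pi^{-1}(f_{m}(x))$ because $\pi(m\mathbf{j})=f_{m}(x)$. To see it is the maximum, take any $\mathbf{c}$ with $\pi(\mathbf{c})=f_{m}(x)$; if $c_{1}<m$ then $\mathbf{c}<m\mathbf{j}$ outright, while if $c_{1}=m$ then $f_{m}(\pi\sigma\mathbf{c})=f_{m}(x)$ forces $\pi\sigma\mathbf{c}=x$ by injectivity, so $\sigma\mathbf{c}\le\tau(x)=\mathbf{j}$ and therefore $\mathbf{c}=m\,\sigma\mathbf{c}\le m\mathbf{j}$. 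Thus $m\mathbf{j}=\tau(f_{m}(x))\in\Sigma_{top}$ and $\sigma(m\mathbf{j})=\mathbf{j}$, giving $\mathbf{j}\in\sigma(\Sigma_{top})$.

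I expect the second inclusion to be the substantive step: the first is essentially Lemma \ref{lemma1} unwound through the conjugacy, whereas the second requires the genuine idea of selecting the maximal index $m$, which is exactly what guarantees that no competing address of $f_{m}(x)$ can start higher. The only points needing care are the hypotheses already in force here---that the $f_{i}$ are invertible (so $f_{j_{1}}^{-1}(x)$ and the injectivity cancellations are legitimate) and that $f_{m}(x)\in A$ (immediate from $f_{m}(A)\subseteq A$)---together with the routine but essential fact that the lexicographic maximum of a closed subset of the compact space $\Sigma$ exists, which is what makes $\tau$ well defined in the first place.
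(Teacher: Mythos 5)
Your proof is correct and takes essentially the same route as the paper: the inclusion $\sigma(\Sigma_{top})\subseteq\Sigma_{top}$ is Lemma \ref{lemma1} transported through $\pi(j_{1}\mathbf{m})=f_{j_{1}}(\pi(\mathbf{m}))$ and the invertibility of $f_{j_{1}}$, while the reverse inclusion prepends the lexicographically greatest symbol and verifies maximality of the resulting address of the image point --- exactly the paper's step $1\,\tau(A)\subseteq\tau(A)$, which you in fact justify more explicitly than the paper's terse ``by a similar argument to the proof of Lemma \ref{lemma1}''. The one discrepancy is purely conventional: the paper's lexicographic order makes $1$ the greatest symbol (which is why its tops partition begins with $A_{1}=f_{1}(A)$, why $\overline{1}$ always lies in $\Sigma_{top}$, and why it prepends $1$ where you prepend $m$), and your argument goes through verbatim after this relabelling.
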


\begin{proof}
First we show that $\sigma\left(  \tau(A)\right)  \subset\tau(A).$ If\textbf{
}$\mathbf{j}\in\tau(A),$ then%
\begin{align*}
\mathbf{j}  &  =\max\{\mathbf{k}\in\Sigma|\pi(\mathbf{k})=\pi(\mathbf{j}%
)\}\text{(by definition)}\\
&  =\max\{j_{1}\mathbf{l}\in\Sigma|\pi(j_{1}\mathbf{l})=\pi(\mathbf{j}%
)\}\text{ (by Lemma \ref{lemma1})}\\
&  =j_{1}\max\{\mathbf{l}\in\Sigma|f_{j_{1}}(\pi(\mathbf{l}))=f_{j_{1}}%
(\pi(\sigma\mathbf{j}))\}\\
&  =j_{1}\max\{\mathbf{l}\in\Sigma|\pi(\mathbf{l})=\pi(\sigma\left(
\mathbf{j}\right)  )\}\text{ (since }f_{j_{1}}\text{ is invertible)}\\
&  =j_{1}\tau(\pi(\sigma\left(  \mathbf{j}\right)  )).
\end{align*}
Hence $\sigma(\mathbf{j})=\tau(\pi(\sigma\left(  \mathbf{j}\right)  )).$ Hence
$\left\{  \sigma\left(  \mathbf{j}\right)  \mathbf{|j}\in\tau(A)\right\}
=\left\{  \tau(\pi(\sigma\left(  \mathbf{j}\right)  ))\mathbf{|j}\in
\tau(A)\right\}  $ which implies $\sigma\left(  \tau(A)\right)  \subset
\tau(A).$

We also have $1\left(  \Sigma\right)  \subset\Sigma$ so $\tau(\pi(1\left(
\Sigma\right)  ))\subset\tau(\pi(\Sigma))=\tau(A).$ But $\tau(\pi(1\left(
\Sigma\right)  )))=1\left(  \tau(\pi(\Sigma))\right)  $ by a similar argument
to the proof of Lemma \ref{lemma1}, so $1\tau(A)\subset\tau(A).$ Applying
$\sigma$ to both sides, we obtain $\tau(A)\subset\sigma\left(  \tau(A)\right)
.$
\end{proof}

It appears that the shift space $\Sigma_{top}$ is not of finite type in general, and graph directed constructions cannot be used in general.
This is a topic of ongoing research.

Define $\Sigma_{top,n}$ to be the elements of $\Sigma_{top}$ truncated to the
first $n$ elements. That is,
\[
\Sigma_{top,n}=\{\left(  \mathbf{k}|n\right)  |\mathbf{k\in}\Sigma_{top}\}.
\]
Define $\pi_{top}:$ $\Sigma_{top}\rightarrow A$ to be the restriction of
$\pi:\Sigma\rightarrow A$ to $\Sigma_{top}.$ Extend the definition of
$\pi_{top}$ so that it acts on truncated top addresses according to:
\[
\pi_{top}\left(  \mathbf{k|}n\right)  =\{x\in f_{\mathbf{k}|n}(A)|x\notin
f_{\mathbf{c}|n}(A)\text{ for all }\mathbf{c}|n>\mathbf{k}|n\}
\]
for all $\mathbf{k\in}\Sigma_{top}$ and all $n\in\mathbb{N}$. We will make use
of the following observation.

\begin{lemma}
\label{topshiftlemma}If $\mathbf{k}\in\Sigma_{top},$ then $f_{k_{1}}\left(
\pi_{top,n-1}(\sigma\left(  \mathbf{k|}n\right)  )\right)  \supset$
$\pi_{top,n}(\mathbf{k|}n)$ for all $n\in\mathbb{N}$.
\end{lemma}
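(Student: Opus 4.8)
The plan is to unwind both sides using the definition of $\pi_{top}$ on truncated addresses and then to exploit the invertibility of $f_{k_{1}}$ together with the behaviour of lexicographic order under prepending a common symbol. I read the second subscript as recording the truncation length, so that $\pi_{top,n}(\mathbf{k}|n)=\pi_{top}(\mathbf{k}|n)$ and $\pi_{top,n-1}(\sigma(\mathbf{k}|n))=\pi_{top}(\sigma(\mathbf{k}|n))$, where $\sigma(\mathbf{k}|n)=k_{2}k_{3}\dots k_{n}$ is a string of length $n-1$. Since $f_{k_{1}}$ is a bijection, establishing $f_{k_{1}}(\pi_{top}(\sigma(\mathbf{k}|n)))\supset\pi_{top}(\mathbf{k}|n)$ is equivalent to showing that $y:=f_{k_{1}}^{-1}(x)\in\pi_{top}(\sigma(\mathbf{k}|n))$ for every $x\in\pi_{top}(\mathbf{k}|n)$. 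So I would fix such an $x$ and verify the two defining conditions for $y$.

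The membership condition is immediate: from $x\in f_{\mathbf{k}|n}(A)=f_{k_{1}}(f_{k_{2}\dots k_{n}}(A))$ (using that address concatenation corresponds to composition, $f_{k_{1}}\circ f_{k_{2}\dots k_{n}}=f_{\mathbf{k}|n}$) we get $y\in f_{k_{2}\dots k_{n}}(A)$, as required. The crux is the exclusion condition, that $y\notin f_{\mathbf{d}}(A)$ for every length-$(n-1)$ string $\mathbf{d}>k_{2}\dots k_{n}$. I would argue by contradiction: if $y\in f_{\mathbf{d}}(A)$ for some such $\mathbf{d}$, then $x=f_{k_{1}}(y)\in f_{k_{1}\mathbf{d}}(A)$, where $k_{1}\mathbf{d}$ is a length-$n$ string. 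The key order observation is that prepending the common symbol $k_{1}$ preserves strict lexicographic inequality, so $k_{1}\mathbf{d}>k_{1}k_{2}\dots k_{n}=\mathbf{k}|n$. Setting $\mathbf{c}|n=k_{1}\mathbf{d}$, this places $x\in f_{\mathbf{c}|n}(A)$ with $\mathbf{c}|n>\mathbf{k}|n$, contradicting $x\in\pi_{top}(\mathbf{k}|n)$. Hence no such $\mathbf{d}$ exists and $y\in\pi_{top}(\sigma(\mathbf{k}|n))$, giving $x\in f_{k_{1}}(\pi_{top}(\sigma(\mathbf{k}|n)))$.

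I expect no serious obstacle: the entire argument is elementary once the two bookkeeping facts are isolated, namely that prepending a fixed symbol preserves lexicographic order and that concatenation of addresses matches composition of maps. The only care needed is the base case $n=1$, where $\sigma(\mathbf{k}|1)$ is the empty address; under the natural convention $\pi_{top,0}(\emptyset)=A$, the right-hand side $\pi_{top}(k_{1})$ is by its definition already a subset of $f_{k_{1}}(A)=f_{k_{1}}(\pi_{top,0}(\emptyset))$, so the containment holds trivially there. Finally I would remark that the hypothesis $\mathbf{k}\in\Sigma_{top}$ is not actually used in the containment itself; its purpose is to ensure that the sets involved are nonempty, since $\mathbf{k}\in\Sigma_{top}$ forces $\pi(\mathbf{k})\in\pi_{top}(\mathbf{k}|n)$ for every $n$, so that the statement is not vacuous.
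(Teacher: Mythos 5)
Your proof is correct and takes essentially the same route as the paper's: the paper's terse two-line proof simply compares the defining conditions of $\pi_{top,n}(\mathbf{k}|n)$ and $f_{k_{1}}\left(\pi_{top,n-1}(\sigma(\mathbf{k}|n))\right)$ and observes that the latter's exclusion condition is \textquotedblleft less restrictive\textquotedblright\ (it only excludes competitors $l_{1}l_{2}\dots l_{n}$ with $l_{1}=k_{1}$), which is exactly the content of your element-chasing argument that prepending the common symbol $k_{1}$ preserves strict lexicographic order. Your explicit handling of the base case $n=1$ and your remark that $\mathbf{k}\in\Sigma_{top}$ serves only to make the sets nonempty are correct refinements of detail the paper leaves implicit.
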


\begin{proof}
We need to compare the sets
\[
\{f_{k_{1}...k_{n}}(x)|f_{k_{1}...k_{n}}(x)\notin f_{l_{1}l_{2}...l_{n}%
}(A)\text{ for all }l_{1}...l_{n}>k_{1}...k_{n}\}
\]
and
\[
\{f_{k_{1}...k_{n}}(x)|f_{k_{1}}f_{k_{2}...k_{n}}(x)\notin f_{k_{1}}%
f_{l_{2}...l_{n}}(A)\text{ for all }l_{2}...l_{n}>k_{2}...k_{n}\}.
\]
The condition in the latter expression is less restrictive.
\end{proof}

The sets of truncated top addresses $\Sigma_{top,n}$ have an interesting
structure. Any addresses in $\Sigma_{top,n}$ can be truncated on the left or
on the right to obtain an address in $\Sigma_{top,n-1}.$ The following Lemma
is readily verified.

\begin{lemma}
Let $n>1.$ If $i_{1}i_{2}\dots i_{n-1}i_{n}\in\Sigma_{top,n}$, then both
$i_{2}\dots i_{n-1}i_{n}\ $and $i_{1}i_{2}\dots i_{n-1}\ $belong to
$\Sigma_{top,n-1}.$
\end{lemma}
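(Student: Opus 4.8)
The plan is to prove the two assertions separately, since they draw on quite different facts. First I would unpack the hypothesis: by the definition of $\Sigma_{top,n}$ as the set of length-$n$ truncations of full top addresses, the assumption $i_{1}i_{2}\dots i_{n}\in\Sigma_{top,n}$ means precisely that there is a complete top address $\mathbf{k}\in\Sigma_{top}$ with $\mathbf{k}|n=i_{1}i_{2}\dots i_{n}$. Both halves of the conclusion are then read off from this single witness $\mathbf{k}$. For the right truncation I would simply observe that $\mathbf{k}|(n-1)=i_{1}i_{2}\dots i_{n-1}$ is the length-$(n-1)$ prefix of the very same $\mathbf{k}\in\Sigma_{top}$, so it lies in $\Sigma_{top,n-1}$ immediately by the definition of that set. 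This case is purely formal --- a prefix of a prefix --- and requires no further argument.

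For the left truncation I would invoke the shift invariance established in Theorem \ref{proposition1}, namely $\Sigma_{top}=\sigma(\Sigma_{top})$. Since $\mathbf{k}\in\Sigma_{top}$, the shifted address $\sigma(\mathbf{k})=i_{2}i_{3}\dots i_{n}k_{n+1}\dots$ again belongs to $\Sigma_{top}$. Truncating it to length $n-1$ gives $\sigma(\mathbf{k})|(n-1)=i_{2}i_{3}\dots i_{n}$, which is therefore an element of $\Sigma_{top,n-1}$, exactly as required.

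The lemma thus carries no genuine difficulty once Theorem \ref{proposition1} is in hand, and the author's description of it as ``readily verified'' is accurate. If there is any point to watch, it is only to keep straight which case consumes the nontrivial input: the right truncation would hold for the prefix set of \emph{any} subset of $\Sigma$ whatsoever, so it uses nothing special about $\Sigma_{top}$, whereas the \emph{left} truncation is precisely one application of the fact that $\Sigma_{top}$ is shift invariant. Recognizing this split --- definitional on the right, shift invariance on the left --- is the whole content of the proof.
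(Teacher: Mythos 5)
Your proof is correct, and since the paper offers no argument beyond calling the Lemma ``readily verified,'' your write-up supplies exactly the intended verification: the right truncation $i_{1}\dots i_{n-1}$ is a prefix of the same witness $\mathbf{k}\in\Sigma_{top}$ and so lies in $\Sigma_{top,n-1}$ purely by definition, while the left truncation $i_{2}\dots i_{n}$ is the length-$(n-1)$ prefix of $\sigma(\mathbf{k})$, which lies in $\Sigma_{top}$ by the inclusion $\sigma\left(\Sigma_{top}\right)\subset\Sigma_{top}$ established in Theorem \ref{proposition1}. Your closing observation about which half consumes the nontrivial input is also accurate, so there is nothing to add.
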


\subsection{Top blowups and tilings\label{topblowupsec}}

Here we are particularly interested in the overlapping case, where the
OSC$\ $does not hold. We show that natural partitions of fractal blowups, that
we call tilings, may still be obtained.

Throughout this subsection, $F$ is an i.f.s. with
\begin{equation}
f_{j}\left(  x\right)  =rU_{j}x+b_{j} \label{ifstype}%
\end{equation}
where $b_{j}\in\mathbb{R}^{n}$ and $U_{j}$ is an orthogonal transformation. We
assume that there are two or more maps, at least two of which have distinct
fixed points. We have in mind the situation where $A$ is homeomorphic to a
ball, although this is not required by Theorems \ref{strichgentheorem} and
\ref{onebartheorem}.

As in Section \ref{blowupsec}, but restricted to $\mathbf{i}\in\Sigma_{top},$
fractal blowups are well defined by\textit{ }%
\[
\mathcal{A}_{n}=\mathcal{A}\left(  \mathbf{i|}n\right)  =\bigcup
\limits_{l=1}^{n}f_{-\mathbf{i}|l}\left(  A\right)  \text{ and }%
\mathcal{A=A}\left(  \mathbf{i}\right)  =\bigcup\limits_{l=1}^{\infty
}f_{-\mathbf{i}|l}\left(  A\right)  .
\]
The unions are of increasing nested sequences of sets so $\mathcal{A}%
_{n}=f_{\mathbf{i}|n}^{-1}\left(  A\right)  $ and $\mathcal{A}=\cup
\mathcal{A}_{n}.$ Note that $\mathcal{A}\left(  \mathbf{i|}n\right)  $ is
related to $\mathcal{A}\left(  \mathbf{j|}n\right)  $ by the isometry $\left(
f_{-\mathbf{j}|n}\right)  \left(  f_{-\mathbf{i}|n}\right)  ^{-1}$. But
possible relationships between $\mathcal{A}\left(  \mathbf{i}\right)  $ and
$\mathcal{A}\left(  \mathbf{j}\right)  $ are quite subtle because inverse
limits are involved.

Under conditions on $F$ and $\mathbf{i}$, stated in Theorems
\ref{strichgentheorem} and \ref{onebartheorem}, we can define a generalized
tilings of $\mathcal{A}\left(  \mathbf{i}\right)  $ with the aid of the
following two definitions:%

\begin{align*}
\Pi_{top}(\mathbf{i}|k) &  :=\left\{  f_{-\mathbf{i}|k}\left(  \left\{
x\in\pi_{top}\left(  \mathbf{t}|\left(  k+1\right)  \right)  \right\}
\right)  |\mathbf{t\in}\Sigma_{top}\right\}  ,\\
\Pi_{top}(\mathbf{i}) &  :=\lim_{k\rightarrow\infty}\Pi_{top}(\mathbf{i}%
|k),\text{ when this limit is well defined.}%
\end{align*}
For example, the limit is well defined when $\Pi_{top}(\mathbf{i}|k)\subset
\Pi_{top}(\mathbf{i}|k+1)$ for all $k,$ as occurs when the OSC holds. As we
will show, it is also well defined in some more complicated situations.

We call each set $f_{-\mathbf{i}|k}\left(  \left\{  x\in\pi_{top}\left(
\mathbf{t}|k\right)  \right\}  \right)  $ a \textit{tile}, and we call the
collection of disjoint sets $\left\{  f_{-\mathbf{i}|k}\left(  \left\{
x\in\pi_{top}\left(  \mathbf{t}|k\right)  \right\}  \right)  |\mathbf{t\in
}\Sigma_{top}\right\}  $ a \textit{partial tiling. }The partial tilings
$\left\{  f_{-\mathbf{i}|k}\left(  \left\{  x\in\pi_{top}\left(
\mathbf{t}|k\right)  \right\}  \right)  |\mathbf{t\in}\Sigma_{top}\right\}  $
are well defined. However, $\Pi_{top}(\mathbf{i})$ may not be well defined,
because there may not be any simple relationship between successive partial
tilings. But when it is well defined, we call it a tiling.

The tiles in the partial tiling $\left\{  f_{-\mathbf{i}|k}\left(  \left\{
x\in\pi_{top}\left(  \mathbf{t}|k\right)  \right\}  \right)  |\mathbf{t\in
}\Sigma_{top}\right\}  $ may be referred to by their addresses. It is
convenient to define
\[
tile(i_{1}i_{2}...i_{k}.t_{1}t_{2}...t_{k})=f_{-\mathbf{i}|k}\left(  \left\{
x\in\pi_{top}\left(  \mathbf{t}|k\right)  \right\}  \right)
\]
for all $\mathbf{i}|k$ and all $\mathbf{t}|k\in\Sigma_{top}.$ We also define
$tile(\varnothing)=A,$ corresponding to $k=0.$

\begin{lemma}
\label{containlemma}This concerns the sequence of tilings $\Pi_{top}%
(\mathbf{i}|n).$ If $i_{n}p_{1}p_{2}...p_{n-1}$ $\in\Sigma_{top,n},$ then
$tile(i_{1}i_{2}...i_{n-1}.p_{1}p_{2}...p_{n-1})\subset tile(i_{1}%
i_{2}...i_{n}.j_{1}j_{2}...j_{n})$ implies $i_{n}p_{1}p_{2}...p_{n-1}%
=j_{1}j_{2}j_{3}..j_{n}$.
\end{lemma}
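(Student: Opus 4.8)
The plan is to strip the outer maps from the two tiles and reduce everything to a statement about the top-address sets $\pi_{top}$, then to exhibit a single witness point whose length-$n$ top address must simultaneously equal $i_np_1\dots p_{n-1}$ and $j_1\dots j_n$. First I would record the elementary reading of the definition of $\pi_{top}(\mathbf{k}|n)$: a point $x$ belongs to $\pi_{top}(\mathbf{k}|n)$ precisely when $\mathbf{k}|n$ is the lexicographically largest length-$n$ prefix occurring among the addresses of $x$, that is, exactly when $\tau(x)|n=\mathbf{k}|n$. Under this reading the hypothesis $i_np_1\dots p_{n-1}\in\Sigma_{top,n}$ says that there is a point $w\in A$ with $\tau(w)|n=i_np_1\dots p_{n-1}$, so $\pi_{top}(i_np_1\dots p_{n-1})$ is nonempty and contains such a $w$.

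Next I would produce the witness. Choosing $\mathbf{k}\in\Sigma_{top}$ with $\mathbf{k}|n=i_np_1\dots p_{n-1}$ and applying Lemma \ref{topshiftlemma}, I get $f_{i_n}\!\left(\pi_{top}(p_1\dots p_{n-1})\right)\supset\pi_{top}(i_np_1\dots p_{n-1})\ni w$, hence $f_{i_n}^{-1}(w)\in\pi_{top}(p_1\dots p_{n-1})$. Now set $P:=f_{-\mathbf{i}|n}(w)=f_{-\mathbf{i}|(n-1)}\!\left(f_{i_n}^{-1}(w)\right)$. The second expression for $P$ shows $P\in f_{-\mathbf{i}|(n-1)}\!\left(\pi_{top}(p_1\dots p_{n-1})\right)=tile(i_1\dots i_{n-1}.p_1\dots p_{n-1})$, so $P$ lies in the left-hand tile.

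Finally I would feed $P$ through the assumed containment. Since $P\in tile(i_1\dots i_{n-1}.p_1\dots p_{n-1})\subset tile(i_1\dots i_n.j_1\dots j_n)=f_{-\mathbf{i}|n}\!\left(\pi_{top}(j_1\dots j_n)\right)$, there is $w'\in\pi_{top}(j_1\dots j_n)$ with $P=f_{-\mathbf{i}|n}(w')$. But $f_{-\mathbf{i}|n}$ is a composition of the invertible maps $f_i^{-1}$, hence injective, so $P=f_{-\mathbf{i}|n}(w)=f_{-\mathbf{i}|n}(w')$ forces $w=w'$, whence $w\in\pi_{top}(j_1\dots j_n)$ and $\tau(w)|n=j_1\dots j_n$. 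Comparing with $\tau(w)|n=i_np_1\dots p_{n-1}$ gives $j_1\dots j_n=i_np_1\dots p_{n-1}$, the desired conclusion.

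The argument is short once the witness is found, and the one point that genuinely needs care is the direction of the inclusion in Lemma \ref{topshiftlemma}: it is $\pi_{top}(i_np_1\dots p_{n-1})\subset f_{i_n}(\pi_{top}(p_1\dots p_{n-1}))$, and it is exactly this inclusion that lets the witness $w$ descend under $f_{i_n}^{-1}$ into the level-$(n-1)$ top set and thereby lie in the left tile. I would also stress that the hypothesis $i_np_1\dots p_{n-1}\in\Sigma_{top,n}$ is what guarantees $w$ exists at all; without it the left tile could be empty, the containment would hold vacuously, and $j_1\dots j_n$ would not be pinned down.
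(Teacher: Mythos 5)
Your proof is correct and is essentially the paper's own argument run pointwise: where the paper applies $\left(f_{-\mathbf{i}|(n-1)}\right)^{-1}$ to the tile containment and uses Lemma \ref{topshiftlemma} to get the set inclusion $\pi_{top,n}(i_np_1\dots p_{n-1})\subset\pi_{top,n}(j_1\dots j_n)$, concluding by disjointness of distinct top sets, you chase a single witness $w$ through the very same inclusion and use injectivity of $f_{-\mathbf{i}|n}$ to land $w$ in $\pi_{top,n}(j_1\dots j_n)$. A small merit of your version is that it makes explicit how the hypothesis $i_np_1\dots p_{n-1}\in\Sigma_{top,n}$ guarantees nonemptiness, a point the paper's final disjointness step uses only implicitly.
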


\begin{proof}
Suppose $tile(i_{1}i_{2}...i_{n-1}.p_{1}p_{2}...p_{n-1})\subset tile(i_{1}%
i_{2}...i_{n}.j_{1}j_{2}...j_{n}).$ Then applying $\left(  f_{-\mathbf{i}%
|\left(  n-1\right)  }\right)  ^{-1}$ to both sides we obtain $\pi
_{top,n-1}(p_{1}p_{2}...p_{n-1})\subset f_{i_{n}}^{-1}(\pi_{top,n}(j_{1}%
j_{2}...j_{n}))$ which is equivalent to
\[
f_{i_{n}}\pi_{top,n-1}(p_{1}p_{2}...p_{n-1})\subset\pi_{top,n}(j_{1}%
j_{2}...j_{n}).
\]
But $\pi_{top,n}(i_{n}p_{1}p_{2}...p_{n-1})\subset f_{i_{n}}\pi_{top,n-1}%
(p_{1}p_{2}...p_{n-1})$ by Lemma \ref{topshiftlemma}, so
\[
\pi_{top,n}(i_{n}p_{1}p_{2}...p_{n-1})\subset\pi_{top,n}(j_{1}j_{2}...j_{n}).
\]
This implies $i_{n}p_{1}p_{2}...p_{n-1}=j_{1}j_{2}j_{3}..j_{n}$ because
otherwise $\pi_{top,n}(i_{n}p_{1}p_{2}...p_{n-1})$ and $\pi_{top,n}(j_{1}%
j_{2}...j_{n})$ are disjoint subsets of $A$.
\end{proof}

We say that $\mathbf{i\in}\Sigma_{top}$ is \textit{reversible} when, for each
$n\in\mathbb{N}$ there exists $\mathbf{j=j}_{n}\mathbf{\in}\Sigma_{top}$ such
that $j_{1}=i_{n},j_{2}=i_{n-1},...,j_{n}=i_{1}.$ Note that $\mathbf{j}$
depends on $n.$ The address $\overline{1}=11111\dots$ is reversible and belongs to $\Sigma
_{top}$ in all cases.

EXAMPLE 3 For the i.f.s. $\left\{  \mathbb{R};f_{1}(x)=2x/3;f_{2}%
(x)=2x/3+1/3\right\}  ,$ the strings $\overline{1}$ and $\overline{2}$ both
belong to $\Sigma_{top}$ and are reversible. Figure \ref{topaddresses3x} and
Figure \ref{topaddressessq - copy (2)} illustrate two ways of looking at the
development of top addresses. Figure \ref{topadd} (a) illustrates the sets in
$\Sigma_{top,n}$ for $n=0,1,2,3,4,5$. We usually use lexicographic ordering to
define top addresses, but Figure \ref{topaddressessq - copy (2)} uses standard ordering.

\begin{figure}[ptb]%
\centering
\includegraphics[
height=2.7657in,
width=5.5in
]%
{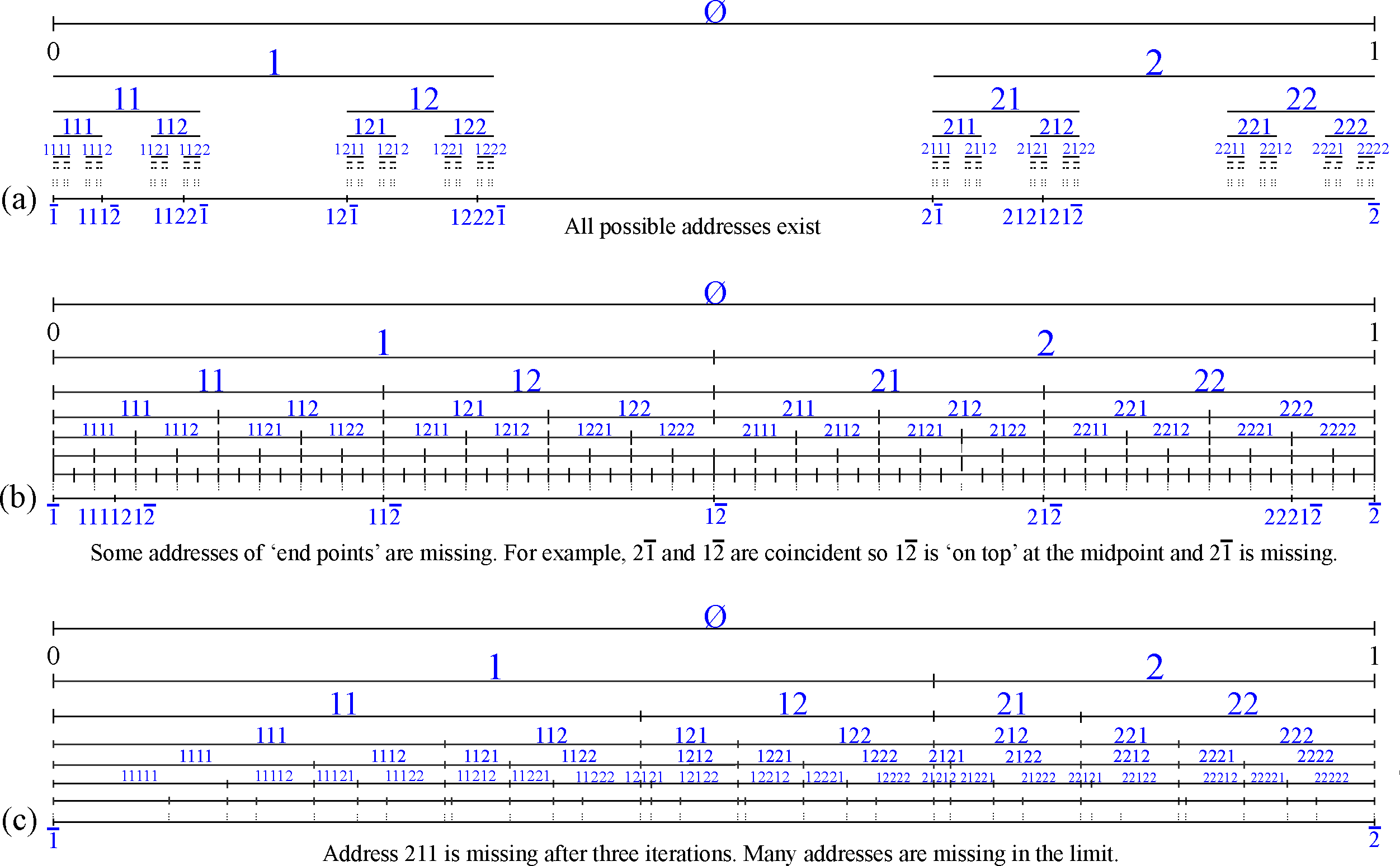}%
\caption{This compares the development of the top
addresses for an i.f.s. of two maps in the cases (a) where each scaling is 1/3 (b) each scaling is  1/2 (c) each scaling is 2/3.}%
\label{topaddresses3x}%
\end{figure}

\begin{figure}[ptb]%
\centering
\includegraphics[
height=3.0415in,
width=3.0381in
]%
{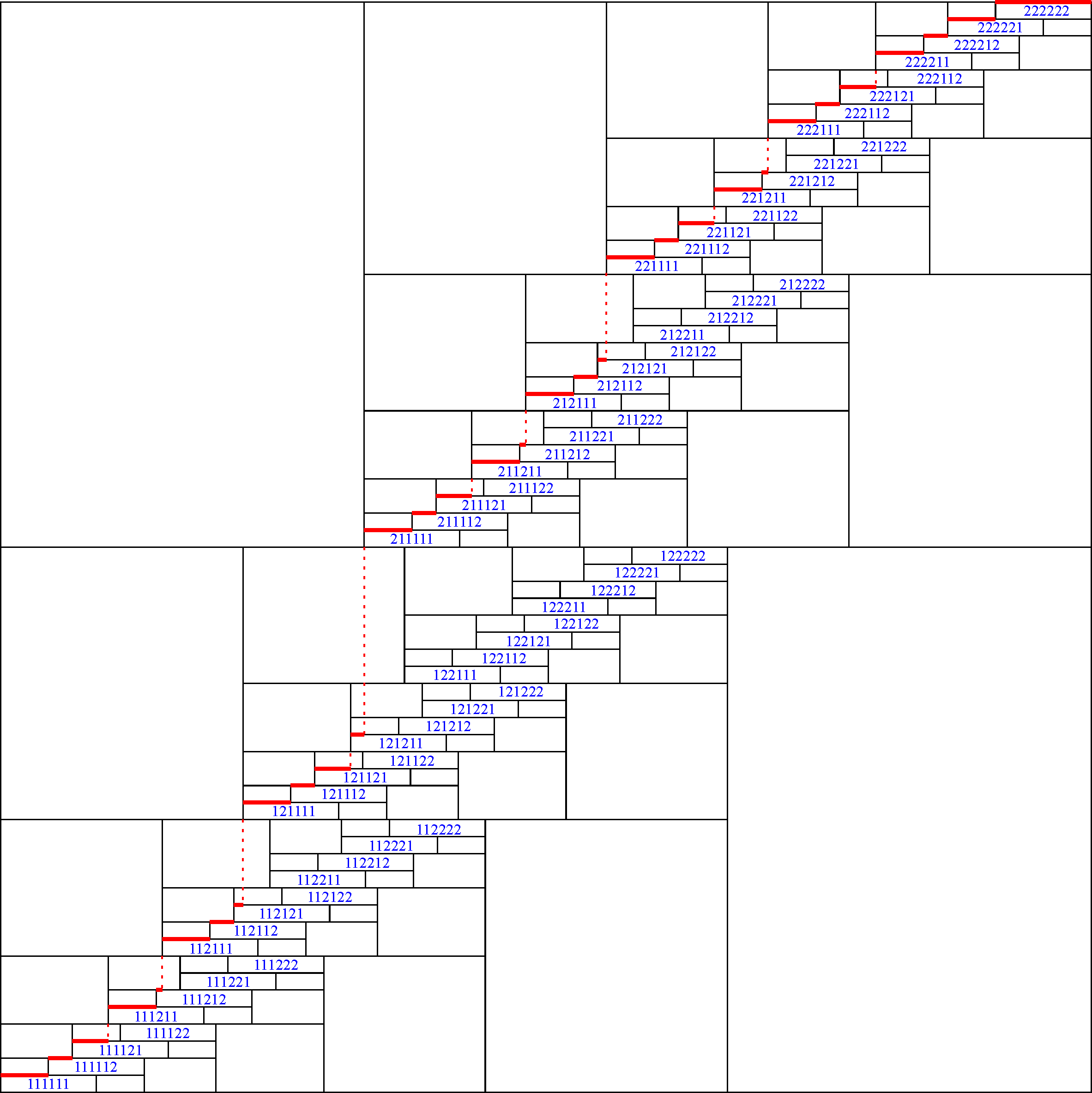}%
\caption{One way of illustrating the top of the attractor of an i.f.s. See Example 3. The ordering here is not lexicographical, so 2 is greater than 1.}%
\label{topaddressessq - copy (2)}%
\end{figure}

EXAMPLE 4 For the i.f.s. $\left\{  \mathbb{R};f_{1}(x)=2x/3;f_{2}%
(x)=1-2x/3\right\}  ,$ each of the strings $\overline{1},\overline
{2},\overline{12},\overline{21}$ belongs to $\Sigma_{top}$ and is reversible.
Figure \ref{topadd}(b) illustrates the sets in $\Sigma_{top,n}$ for
$n=0,1,2,3,4,5$. Here it appears that all addresses are reversible.

\begin{figure}[ptb]%
\centering
\includegraphics[
height=1.548in,
width=4.1589in
]%
{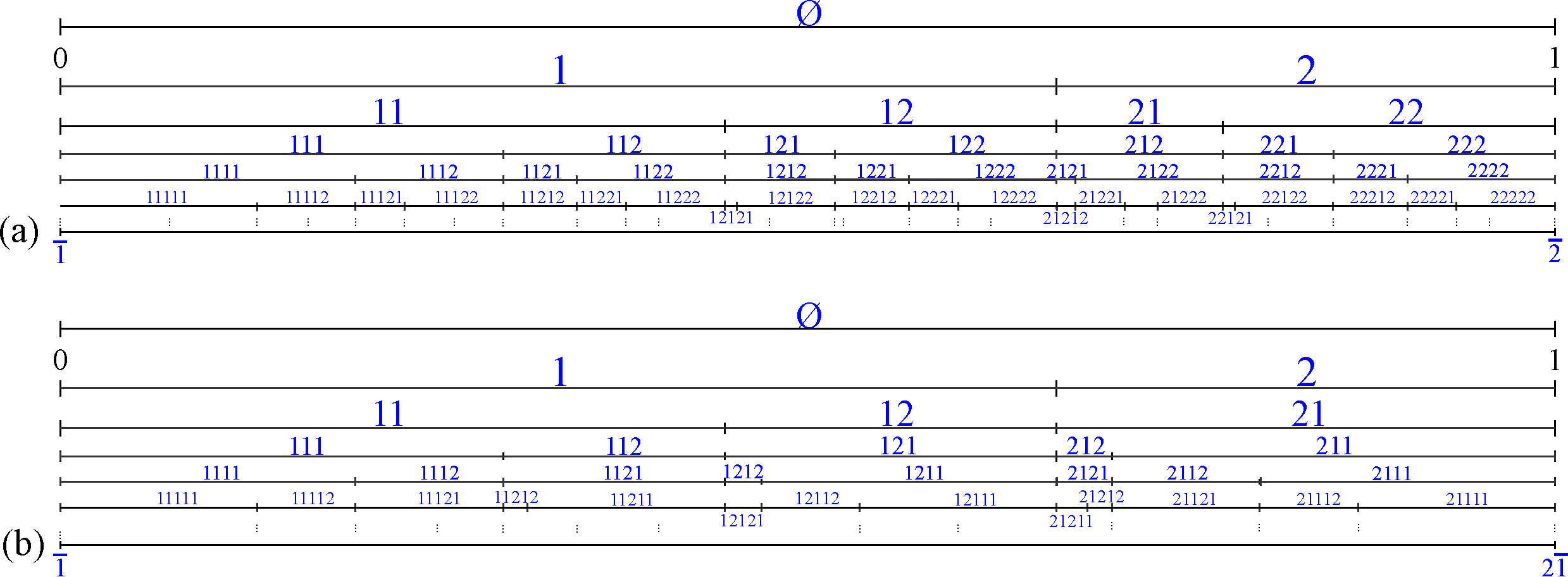}%
\caption{See Examples 3 and 4.}%
\label{topadd}%
\end{figure}

Let us define a \textit{new tile} to be a tile at level $n+1$ that is not
contained in any tile at level $n.$ Also, a \textit{child }or \textit{child
tile}, is a tile at level $n+1$ that is contained in a tile, its
\textit{parent} at level $n.$

\begin{theorem}
\label{strichgentheorem} Let $F$ be an invertible contractive i.f.s. on
$\mathbb{R}^{n}$, as defined in Equation \ref{ifstype}. Let $\mathbf{i\in
}\Sigma_{top}$ be reversible. Each tile in $\Pi_{top}(\mathbf{i}|k+1)$ is
either (i) a nonempty subset, the child\ of a tile in $\Pi_{top}%
(\mathbf{i}|k),$ of the form $tile(i_{1}...i_{k+1}.i_{k+1}p_{1}...p_{k}),$ or
(ii) a nonempty set of the form $tile(i_{1}...i_{k+1}.q_{1}q_{2}%
...q_{k}q_{k+1})$ where $q_{1}\neq i_{k+1},$ a new tile. Each tile in
$\Pi_{top}(\mathbf{i}|k)$ contains exactly one child in $\Pi_{top}%
(\mathbf{i}|k+1)$.
\end{theorem}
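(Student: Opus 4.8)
The plan is to translate the whole statement into assertions about the top cells
\[
\pi_{top}(\mathbf{w}|n)=\{x\in A:\tau(x)|n=\mathbf{w}|n\},
\]
which for each fixed $n$ partition $A$ as $\mathbf{w}|n$ ranges over $\Sigma_{top,n}$, and then to compare the level-$k$ and level-$(k+1)$ partial tilings by stripping off the blowup maps. Since $f_{-\mathbf{i}|(k+1)}=f_{-\mathbf{i}|k}\circ f_{i_{k+1}}^{-1}$ and $f_{-\mathbf{i}|k}$ is a bijection, the inclusion $tile(i_{1}\dots i_{k+1}.\mathbf{w}|(k+1))\subset tile(i_{1}\dots i_{k}.\mathbf{s}|k)$ holds if and only if
\[
\pi_{top}(\mathbf{w}|(k+1))\subset f_{i_{k+1}}(\pi_{top}(\mathbf{s}|k)).
\]
So the entire theorem reduces to analysing this inclusion, and the first symbol $w_{1}$ turns out to be the decisive quantity, exactly as the statement predicts.

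First I would dispose of case (i). If $w_{1}=i_{k+1}$, the tops dynamical system --- the identity $\tau(f_{w_{1}}^{-1}x)=\sigma\tau(x)$ valid when $w_{1}=\tau(x)_{1}$, established in the proof of Theorem \ref{proposition1} and underlying Lemma \ref{topshiftlemma} --- shows that every $x\in\pi_{top}(\mathbf{w}|(k+1))$ equals $f_{i_{k+1}}(y)$ with $\tau(y)|k=w_{2}\dots w_{k+1}$. Hence the displayed inclusion holds with $\mathbf{s}|k=w_{2}\dots w_{k+1}$, which lies in $\Sigma_{top,k}$ by the truncation lemma. This exhibits the tile as the child of $tile(i_{1}\dots i_{k}.w_{2}\dots w_{k+1})$ and puts it in the announced form $tile(i_{1}\dots i_{k+1}.i_{k+1}p_{1}\dots p_{k})$; uniqueness of the parent is immediate, since the level-$k$ tiles are pairwise disjoint and a nonempty set lies in at most one of them.

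Next, case (ii): if $w_{1}\neq i_{k+1}$ I must show the displayed inclusion fails for \emph{every} $\mathbf{s}|k$, so that the tile is new. One of the two inequalities $w_{1}\lessgtr i_{k+1}$ is excluded outright by the extremal (lexicographic) character of $\tau$: any point of $f_{i_{k+1}}(\pi_{top}(\mathbf{s}|k))$ lies in $f_{i_{k+1}}(A)$ and so carries an address beginning with $i_{k+1}$, which constrains its top first symbol relative to $i_{k+1}$ and contradicts $w_{1}$ being more extreme than $i_{k+1}$. The remaining inequality is the genuinely geometric case: a point of $\pi_{top}(\mathbf{w}|(k+1))$ may legitimately sit in the overlap $f_{w_{1}}(A)\cap f_{i_{k+1}}(A)$, so the inclusion is not excluded pointwise, and I must instead show that the \emph{whole} cell cannot be swallowed by a single $f_{i_{k+1}}(\pi_{top}(\mathbf{s}|k))$ --- equivalently, that pulling it back by $f_{i_{k+1}}^{-1}$ expands it across a boundary of the truncated-top partition. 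This rigidity of the partition (the same rigidity that powers Lemma \ref{containlemma}) is where the ball-like shape of $A$ in the examples, and reversibility in general, do the real work.

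The last assertion --- that every level-$k$ tile contains exactly one child --- is the main obstacle, and is where I would lean hardest on reversibility. By case (i) the children of $tile(i_{1}\dots i_{k}.\mathbf{s}|k)$ are precisely the nonempty tiles $tile(i_{1}\dots i_{k+1}.i_{k+1}\mathbf{s}|k)$, so there is at most one, and existence is equivalent to $i_{k+1}\mathbf{s}|k\in\Sigma_{top,k+1}$, i.e.\ to prependability of the single symbol $i_{k+1}$ to \emph{every} length-$k$ top word. Shift invariance (Theorem \ref{proposition1}) guarantees that \emph{some} symbol can be prepended to each $\mathbf{s}|k$, but not that it is $i_{k+1}$; closing this gap is the crux. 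The plan is to use reversibility of $\mathbf{i}$: the reversed prefix $i_{k+1}i_{k}\dots i_{1}$ lies in $\Sigma_{top,k+1}$, furnishing a seed point whose top address begins with $i_{k+1}$, and then to propagate prependability of $i_{k+1}$ from this seed to an arbitrary $\mathbf{s}|k$, exploiting that the common ratio $r$ and orthogonal parts $U_{j}$ make $f_{i_{k+1}}$ act as one fixed similarity on all cells. Upgrading ``$i_{k+1}$ is prependable somewhere'' to ``$i_{k+1}$ is prependable everywhere'' is the step I expect to be hardest and most delicate, and it is exactly the point at which the overlapping, non-OSC nature of the problem bites.
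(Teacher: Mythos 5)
Your reduction to top cells and your case (i) are sound and coincide with the paper's argument (Lemma \ref{topshiftlemma} plus disjointness of the level-$k$ tiles). The first genuine problem is case (ii): the statement you defer to a ``rigidity'' argument --- that for $q_{1}\neq i_{k+1}$ the inclusion of the whole cell fails for \emph{every} $\mathbf{s}|k$, i.e.\ that a type-(ii) tile is never contained in a level-$k$ tile --- is false, so no amount of delicacy in your $w_{1}>i_{k+1}$ case can close it. Take the paper's own Example 3, $f_{1}(x)=2x/3$, $f_{2}(x)=2x/3+1/3$, $A=[0,1]$, with $\mathbf{i}=\overline{1}$ (reversible). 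Then $\pi_{top}(2)=[1/3,1]$ and $\pi_{top}(21)=[1/3,5/9)$, so $tile(1.2)=\frac{3}{2}[1/3,1]=[1/2,3/2]$ while $tile(11.21)=\frac{9}{4}[1/3,5/9)=[3/4,5/4)\subset tile(1.2)$: a non-cancelling tile ($q_{1}=2\neq i_{2}=1$) wholly swallowed by an old tile, exactly in your unresolved case $w_{1}>i_{k+1}$. What the paper actually proves, via Lemma \ref{containlemma}, is the \emph{opposite} containment: a level-$k$ tile can sit inside a level-$(k+1)$ tile only when the addresses cancel, so type-(ii) tiles never \emph{contain} an old tile. ``New'' in the theorem is thus operative as the address dichotomy $q_{1}\neq i_{k+1}$ together with this one-directional exclusion (the paper's informal definition of ``new tile'' is looser than what its proof establishes), and your proposal neither proves that direction nor deploys Lemma \ref{containlemma} where it is the whole point. (A small slip besides: membership in $f_{i_{k+1}}(A)$ forces $\tau(x)_{1}\geq i_{k+1}$, so it is $w_{1}<i_{k+1}$, the \emph{less} extreme symbol, that is excluded pointwise.)

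On the last assertion your reduction is correct and matches the mechanism in the paper: the form-(i) child of $tile(i_{1}\dots i_{k}.\mathbf{s}|k)$ is unique if it exists, and existence is equivalent to $i_{k+1}\mathbf{s}|k\in\Sigma_{top,k+1}$. But you then leave the upgrade from ``$i_{k+1}$ is prependable to the reversed word $i_{k}\dots i_{1}$'' to ``prependable to every $\mathbf{s}|k\in\Sigma_{top,k}$'' explicitly open, and since that is the entire content of ``each tile contains exactly one child,'' the proposal is incomplete at its declared crux. For comparison, the paper closes this step by exactly the one-word consequence of reversibility that you extract (nonemptiness of the set of cancelling addresses $\{i_{k+1}p_{1}\dots p_{k}\in\Sigma_{top,k+1}\}$) and then asserts the per-tile statement; a blind proof cannot stop where you stop, and your proposed propagation via the common similarity $f_{i_{k+1}}$ is not developed into an argument. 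Net verdict: case (i) correct and the same as the paper's; case (ii) attacks a statement that the paper's own Example 3 refutes, instead of the reverse-containment claim carried by Lemma \ref{containlemma}; the final claim is identified but not proved.
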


\begin{proof}
We can write
\begin{align*}
\Pi_{top}(\mathbf{i}|k+1)  &  =\left\{  tile(i_{1}i_{2}...i_{k+1}.i_{k+1}%
p_{2}...p_{k+1})|i_{k+1}p_{2}...p_{k+1}\in\Sigma_{top,k+1}\right\} \\
&  \cup\left\{  tile(i_{1}i_{2}...i_{k+1}.j_{1}j_{2}...j_{k+1})|j_{1}%
j_{2}...j_{k+1}\in\Sigma_{top,k+1},j_{1}\neq i_{k+1}\right\}
\end{align*}
Each tile in the first set is a subset of a tile in $\Pi_{top}(\mathbf{i}|k),$
and it is non-empty because $\mathbf{i}$ is reversible. (By reversibility, the
set of top addresses $\{i_{k+1}p_{1}p_{2}...p_{k}\in\Sigma_{top,k+1}%
|p_{1}p_{2}...p_{k}\in\Sigma_{top,k}\}$ is nonempty.)

Consider any tile $tile(i_{1}i_{2}...i_{k+1}.p_{1}p_{2}...p_{k+1})$ in the
second set. By Lemma \ref{containlemma}: if $tile(i_{1}i_{2}...i_{k}%
.p_{1}p_{2}...p_{k})\subset$ $tile(i_{1}i_{2}...i_{k+1}.i_{k+1}p_{2}%
...p_{k+1}),$ then $i_{k+1}p_{1}p_{2}...p_{k}=j_{1}j_{2}j_{3}..j_{k+1}$ which
is not possible because $j_{1}\neq i_{k+1}$. So no tile in the second set is
contained in a tile in the first set. That is to say, the tiles in the second
set, which have non-cancelling addresses, are \textquotedblleft
new\textquotedblright\ and do not contain any tile in the first set.

This says that every tile at level $k$ has a unique child at level $k+1,$
either equal to its parent, or smaller but not empty; also, there are new
tiles at level $k+1$ which do not have predecessors at level $k$, because
$\mathcal{A}_{k+1}\neq\mathcal{A}_{k}.$ Each tile in $\Pi_{top}(\mathbf{i}|k)$
contains a child in $\Pi_{top}(\mathbf{i}|k+1)$. One deduces that
$\mathcal{A}_{k+1}\backslash\cup\left\{  \text{children of tiles at level
}k\right\}  $ is tiled by new tiles.
\end{proof}

In the special case $\mathbf{i=}\overline{\mathbf{1}},$ also considered by
Strichartz in Theorem \ref{theorem4}, we have:

\begin{theorem}
\label{onebartheorem}Let $F$ be an invertible contractive i.f.s. on
$\mathbb{R}^{n}$, as defined in Equation \ref{ifstype}. Then $\Pi
_{top}(\overline{\mathbf{1}})$ is a well defined tiling of $\mathcal{A}%
(\overline{\mathbf{1}}):$ specifically $\Pi_{top}(\overline{\mathbf{1}%
}|k)\subset\Pi_{top}(\overline{\mathbf{1}}|k+1),$ and%
\[
\Pi_{top}(\overline{\mathbf{1}})=\bigcup\limits_{k=1}^{\infty}\Pi
_{top}(\overline{\mathbf{1}}|k).
\]
Each tile $\Pi_{top}(\overline{\mathbf{1}}|k)$ (for all $k\in\mathbb{N}$) in
$\Pi_{top}(\overline{\mathbf{1}})$ can be written $tile((\overline{1}%
|k)|t_{1}t_{2}...t_{k})$ for some $t_{1}t_{2}...t_{k}\in\Sigma_{top,k}$ for
some $k,$ with $t_{1}\neq1.$ The tile $A$ corresponds to $k=0.$
\end{theorem}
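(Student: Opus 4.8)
The plan is to reduce the whole statement to a single exact identity: for $\mathbf{i}=\overline{\mathbf{1}}$, passing from a tile to its unique child is the same as prepending the symbol $1$ to a truncated top address, and---crucially---this is an equality of sets, not merely an inclusion. I would first prove
\[
f_{1}(\pi_{top}(t_{1}\dots t_{k}))=\pi_{top}(1t_{1}\dots t_{k})\qquad\text{for all }t_{1}\dots t_{k}\in\Sigma_{top,k}.
\]
Lemma~\ref{topshiftlemma} already gives the inclusion $\supseteq$ (with first symbol $1$). For the reverse inclusion I would use that $\mathbf{i}=\overline{\mathbf{1}}$, together with the paper's convention that $1$ is the lexicographically largest symbol---which is exactly why $\overline{\mathbf{1}}$ is a top address in all cases---to observe that any $c_{1}\dots c_{k+1}>1t_{1}\dots t_{k}$ must have $c_{1}=1$, since no symbol exceeds $1$. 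Writing a point of $\pi_{top}(1t_{1}\dots t_{k})$ as $f_{1}(y)$ with $y\in f_{t_{1}\dots t_{k}}(A)$ (legitimate and unique, as $f_{1}$ is invertible), the defining exclusions $f_{1}(y)\notin f_{1c_{2}\dots c_{k+1}}(A)$ cancel the injective $f_{1}$ and become precisely the exclusions $y\notin f_{c_{2}\dots c_{k+1}}(A)$ for $c_{2}\dots c_{k+1}>t_{1}\dots t_{k}$, i.e.\ $y\in\pi_{top}(t_{1}\dots t_{k})$. This gives $\subseteq$, hence the identity.

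Granting the identity, I would apply $f_{-\overline{\mathbf{1}}|(k+1)}=f_{1}^{-(k+1)}$ to both sides to obtain the tile equality
\[
tile((\overline{1}|k).t_{1}\dots t_{k})=tile((\overline{1}|(k+1)).1t_{1}\dots t_{k}),
\]
which says that every tile of $\Pi_{top}(\overline{\mathbf{1}}|k)$ reappears unchanged in $\Pi_{top}(\overline{\mathbf{1}}|(k+1))$; that is, $\Pi_{top}(\overline{\mathbf{1}}|k)\subset\Pi_{top}(\overline{\mathbf{1}}|(k+1))$. Nestedness makes the limit well defined and equal to the increasing union $\bigcup_{k\ge 1}\Pi_{top}(\overline{\mathbf{1}}|k)$. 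To see that this union tiles $\mathcal{A}(\overline{\mathbf{1}})$, I would note that for each fixed $k$ the nonempty sets $\pi_{top}(t_{1}\dots t_{k})$, indexed by $\Sigma_{top,k}$, partition $A$ (each $x\in A$ lies in exactly the one indexed by its maximal length-$k$ cylinder); applying the isometry $f_{1}^{-k}$, their images partition $\mathcal{A}_{k}=f_{1}^{-k}(A)$. Nestedness then promotes pairwise disjointness and covering from each $\mathcal{A}_{k}$ to the union $\mathcal{A}=\bigcup_{k}\mathcal{A}_{k}$.

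For the normal form of the tiles I would invoke Theorem~\ref{strichgentheorem} with the reversible address $\mathbf{i}=\overline{\mathbf{1}}$, so that $i_{k+1}=1$ for every $k$. Each tile at level $k+1$ is then either a child $tile((\overline{1}|(k+1)).1p_{1}\dots p_{k})$---which by the identity above equals its parent and so is nothing new---or a new tile $tile((\overline{1}|(k+1)).q_{1}\dots q_{k+1})$ with $q_{1}\neq 1$. Consequently every distinct tile of $\Pi_{top}(\overline{\mathbf{1}})$, other than $tile(\varnothing)=A$ at level $0$, first enters the sequence as a new tile and therefore has the claimed form $tile((\overline{1}|k).t_{1}\dots t_{k})$ with $t_{1}\neq 1$.

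The heart of the matter---and the only step that is more than bookkeeping---is the reverse inclusion in the opening identity, i.e.\ upgrading Lemma~\ref{topshiftlemma} from $\supseteq$ to $=$. This is exactly what fails for a general reversible $\mathbf{i}$ and a general overlapping i.f.s.: when $i_{k+1}\neq 1$ there exist lexicographically larger cylinders $f_{c_{1}\dots c_{k+1}}(A)$ with $c_{1}>i_{k+1}$ that can genuinely meet the relevant set, so a parent tile may strictly contain its child and the partial tilings need not nest---precisely the phenomenon Theorem~\ref{strichgentheorem} is built to accommodate. The feature that rescues $\mathbf{i}=\overline{\mathbf{1}}$ is that $1$ is the maximal symbol, which eliminates all competing cylinders; I expect essentially all the work to lie in making this cancellation precise, the remaining steps being the routine consequences sketched above.
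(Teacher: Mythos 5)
Correct, and essentially the paper's own argument: the paper's entire proof consists of the single identity $f_{1}^{-1}\pi_{top}(1t_{1}t_{2}\dots t_{k})=\pi_{top}(t_{1}t_{2}\dots t_{k})$ for $1t_{1}t_{2}\dots t_{k}\in\Sigma_{top,k+1}$ (``all children are exact copies of their parents''), which is precisely your opening identity, with nestedness, the increasing union, and the normal form of tiles then following via Theorem \ref{strichgentheorem} just as you describe. Your added detail---that maximality of the symbol $1$ forces every competing string $c_{1}\dots c_{k+1}>1t_{1}\dots t_{k}$ to begin with $c_{1}=1$, so the injective $f_{1}$ cancels and Lemma \ref{topshiftlemma} upgrades to an equality---is exactly the verification the paper leaves implicit.
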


\begin{proof}
The result follows from the observation that in this case all children are
exact copies of their parents. To see this simply note that $f_{1}^{-1}%
\pi_{top}(1t_{1}t_{2}\dots t_{k})=\pi_{top}(t_{1}t_{2}\dots t_{k})$ for all
$1t_{1}t_{2}\dots t_{k}\in\Sigma_{top,k+1}.$
\end{proof}

For future work, one can consider the case where $A$ is homeomorphic to a
ball. By introducing a stronger notion of reversibility (see also
\cite{tilings}), that requires the tops dynamical system orbit of a reversible
point $\mathbf{i}\in\Sigma_{top}$ to be contained in a compact set $A^{\prime
}$ contained in the interior of $A,$ one can ensure that new tiles are located
further and further away from $A.$ This means that new tiles have only
finitely many successive generations of children (one child at each subsequent
generation) before children are identical to their parents. Hence, given any
ball $B$ of finite radius, the set of tiles in $\Pi_{top}(\mathbf{i}|k)$ that
have nonempty intersection with $B$ remains constant for all large enough $k.$
In such cases one $\Pi_{top}(\mathbf{i}|k)\cap B$ is constant for all $k$
sufficiently large, and so the tiling $\Pi_{top}(\mathbf{i})$ is well defined.
We note that if $\mathbf{i}$ is disjunctive then $\mathcal{A}(\mathbf{i}%
)=\mathbb{R}^{n},$ see \cite{tilings}.

We conjecture that if $A$ is homeomorphic to a ball and if $\mathbf{i\in
\Sigma}_{top}$ is both reversible and disjunctive (relative to the top), then $\Pi_{top}%
(\mathbf{i})$ is a well defined tiling of $\mathbb{R}^{n}.$

\subsection{A leafy example of a two-dimensional top tiling\label{leafysec}}

For a two-dimensional affine transformation $f:\mathbb{R}^{2}\rightarrow
\mathbb{R}^{2}$ we write
\[
\text{ }f=%
\begin{bmatrix}
a & b & e\\
c & d & g
\end{bmatrix}
\text{ for }f(x,y)=(ax+by+e,cx+dy+g)\text{ }%
\]
where $a,b,c,d,e,g\in\mathbb{R}$. We consider the i.f.s. defined by the two
similitudes%
\begin{equation}
f_{1}=%
\begin{bmatrix}
0.7526 & -.2190 & .2474\\
0.2190 & 0.7526 & -.0726
\end{bmatrix}
,f_{2}=%
\begin{bmatrix}
-0.7526 & 0.2190 & 1.0349\\
0.2190 & 0.7526 & 0.0678
\end{bmatrix}
\label{leafifs}%
\end{equation}
The attractor, $L$=leaf, illustrated in Figure \ref{l0}, is made of two
overlapping copies of itself. The copy illustrated in black is associated with
$f_{1}.$ The point with top address $\overline{\mathbf{1}}=111\dots$ is
represented by the tip of the stem of the leaf. The stem is actually arranged
in an infinite spiral, not visible in the picture.
In all tiling pictures, the colors of the tiles were obtained by overlaying the tiling on a colorful photograph: the color of each tile is the color of a point beneath it. In this way, if the tiles were very small, the tiling would look like a mozaic representation of the underlying picture.

\begin{figure}[ptb]%
\centering
\includegraphics[
height=1.1796in,
width=2.5495in
]%
{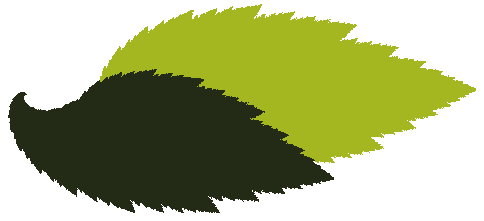}%
\caption{The overlapping attractor of an i.f.s. of two similitudes,
each with the same scaling factor.}%
\label{l0}%
\end{figure}

Figure \ref{overlapleaf} illustrates the top of $L$ at depths $n\in
\{1,2,\dots,6\}$ labelled by the addresses in $\Sigma_{n,top}$ .

\begin{figure}[ptb]%
\centering
\includegraphics[
height=2.0in,
width=5.5in
]%
{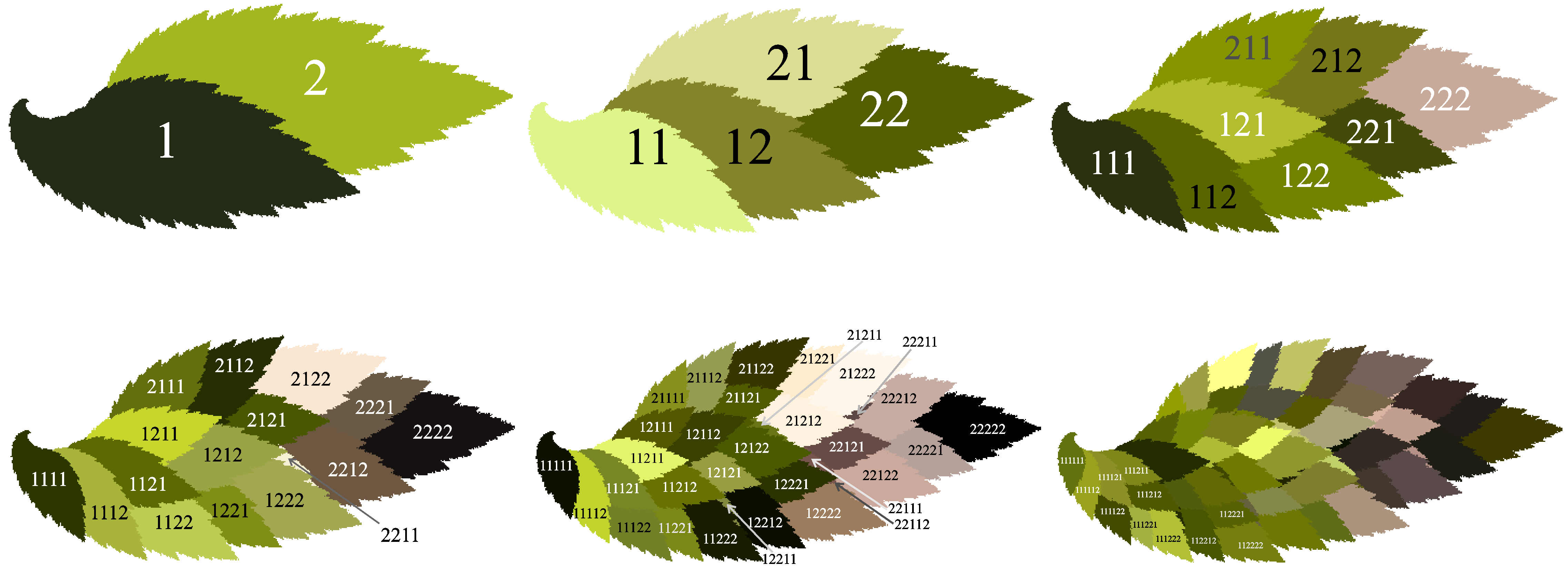}%
\caption{Successsive fractal tops.}%
\label{overlapleaf}%
\end{figure}

Figure \ref{newseq} illustrates the successive blowups $\Pi_{top,n}%
(\overline{1}|n)$ for $n=1,2,\dots,6$ for the i.f.s. in Equation (5.2). See
also Figure \ref{ontopfig} where the successive images are illustrated in
their correct relative positions.

\begin{figure}[ptb]%
\centering
\includegraphics[
height=2.7648in,
width=5.0047in
]%
{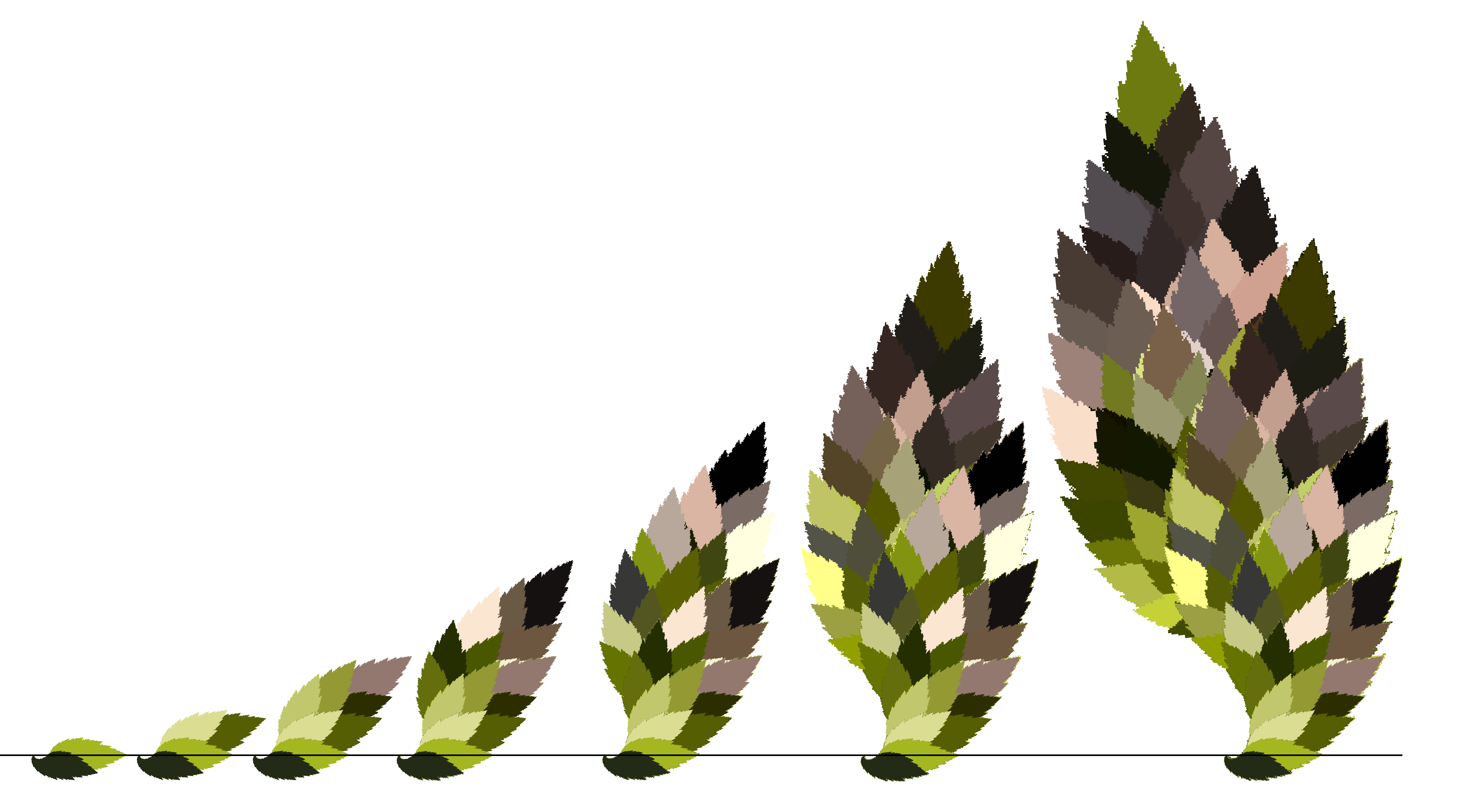}%
\caption{This shows the sequence of tops $\Pi(111...|n)$ for
$n=0,1,\dots,6$ for the leaf i.f.s.  In each case the tip of the stem is at the origin.}%
\label{newseq}%
\end{figure}

\begin{figure}[ptb]%
\centering
\includegraphics[
height=3.44in,
width=2in
]%
{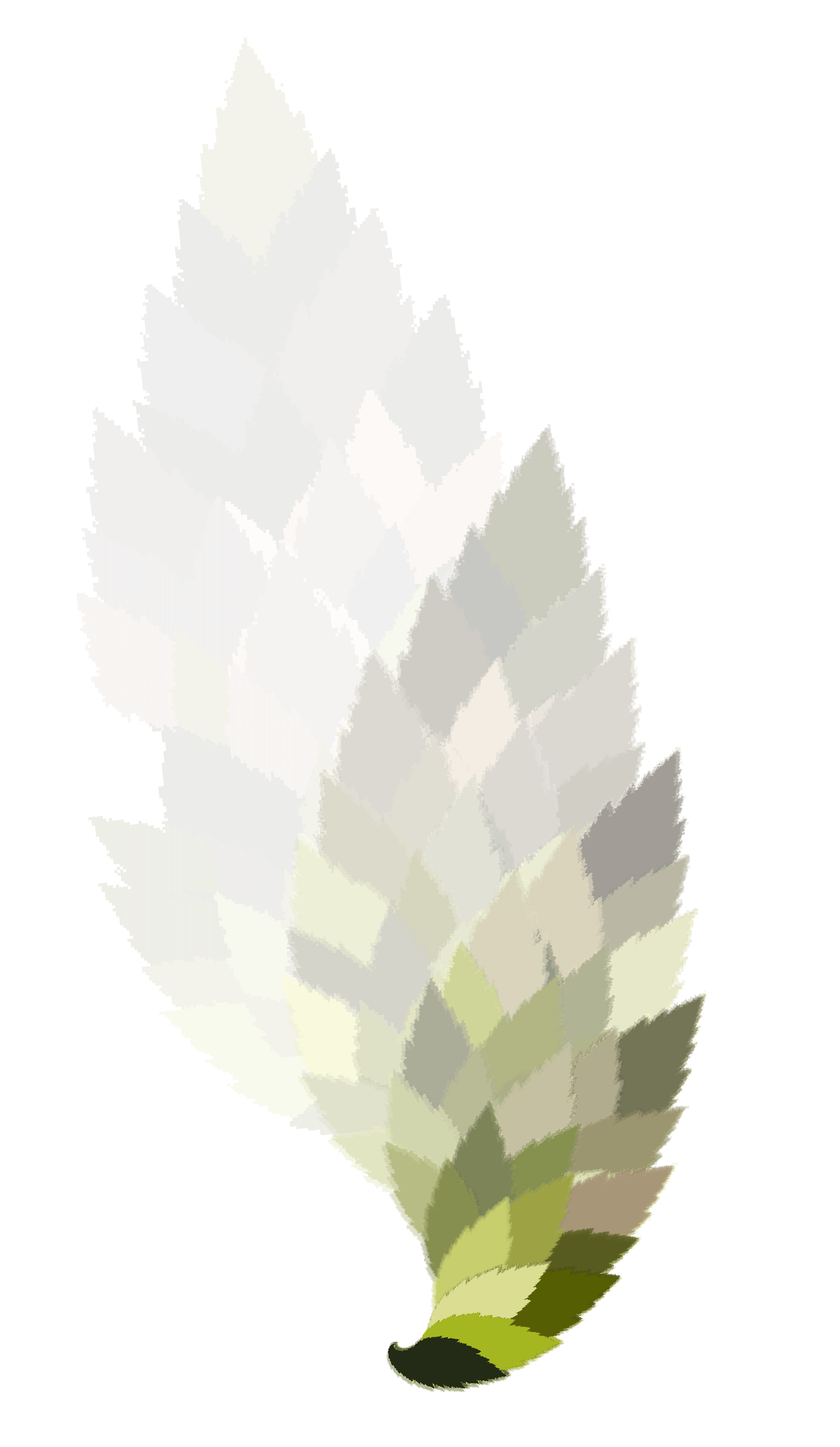}%
\caption{This illustrates the relationship between the successive partial tilings in Figure 8.}%
\label{ontopfig}%
\end{figure}

Figure \ref{strichleaf2} shows a patch of a leaf tiling, illustrating its
complexity. Figure \ref{sandraFern} illustrates a patch of a top tiling
obtained using an i.f.s. of four maps.

\begin{figure}[ptb]%
\centering
\includegraphics[
height=2.765in,
width=3.4203in
]%
{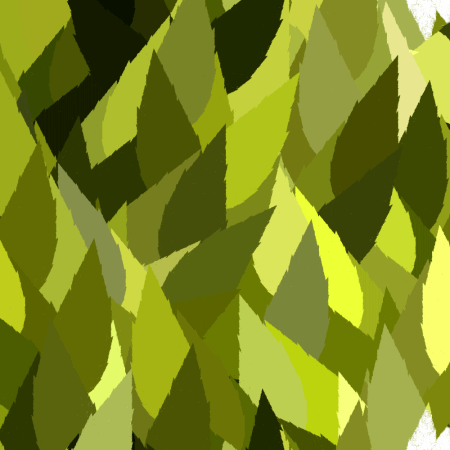}%
\caption{Patch of a leaf tiling.}%
\label{strichleaf2}%
\end{figure}

\begin{figure}[ptb]%
\centering
\includegraphics[
height=2.765in,
width=3.4203in
]%
{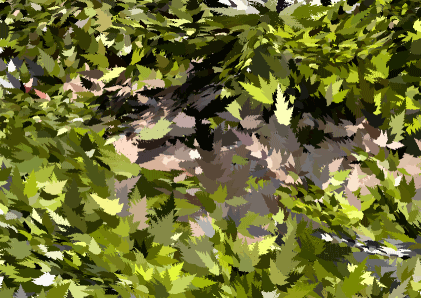}%
\caption{Fractal top for an i.f.s of three maps looks both random and somewhat natural, but is not the real thing, compare with Figure 1.}%
\label{sandraFern}%
\end{figure}

ACKNOWLEDGEMENTS: We thank both Brendan Harding and Giorgio Mantica for careful reading and corrections.

\end{document}